\newtheorem{thm}{Theorem}
\newtheorem{cor}[thm]{Corollary}
\newtheorem{definition}[thm]{Definition}
\newtheorem{lem}[thm]{Lemma}
\newtheorem{prop}[thm]{Proposition}
\newtheorem{obs}[thm]{Observation}
\def\R{\mathbb{R}}
\def\IIa{\mathcal{I}_{\mathbb R}}
\def\HH{\mathcal{H}}
\mathchardef\mhyphen="2D
\def\PQ{{\it Point-Quadrant}\xspace}
\def\QP{{\it Quadrant-Point}\xspace}
\def\PI{{\it Point-Interval}\xspace}
\def\IP{{\it Interval-Point}\xspace}
\def\IBI{{\it Interval-Bigger-Interval}\xspace}
\def\ISI{{\it Interval-Smaller-Interval}\xspace}
\def\ICI{{\it Interval-Crossing-Interval}\xspace} 
\def\PO{{\it Point-Octant}\xspace}
\def\OP{{\it Octant-Point}\xspace}
\def\dH{\mbox{\ensuremath{\mathcal D\mhyphen \mathcal H}}\xspace}
\def\dPQ{\mbox{\ensuremath{\mathcal D}}{\it -Point-Quadrant}\xspace}
\def\dPI{\mbox{\ensuremath{\mathcal D}}{\it -Point-Interval}\xspace}
\def\dIP{\mbox{\ensuremath{\mathcal D}}{\it -Interval-Point}\xspace}
\def\dIBI{\mbox{\ensuremath{\mathcal D}}{\it -Interval-Bigger-Interval}\xspace}   
\def\dISI{\mbox{\ensuremath{\mathcal D}}{\it -Interval-Smaller-Interval}\xspace}
\def\dICI{\mbox{\ensuremath{\mathcal D}}{\it -Interval-Crossing-Interval}\xspace}
\def\mPO{\mbox{\ensuremath{m(Point\mhyphen Octant)}}\xspace}
\def\mPQ{\mbox{\ensuremath{m(\mathcal D\mhyphen Point\mhyphen Quadrant)}}\xspace}
\def\mOP{\mbox{\ensuremath{m(Octant\mhyphen Point)}}\xspace}
\def\mPI{\mbox{\ensuremath{m(\mathcal D\mhyphen Point\mhyphen Interval)}}\xspace}
\def\mIP{\mbox{\ensuremath{m(\mathcal D\mhyphen Interval\mhyphen Point)}}\xspace}
\def\mIBI{\mbox{\ensuremath{m(\mathcal D\mhyphen Interval\mhyphen Bigger\mhyphen Interval)}}\xspace}
\def\mISI{\mbox{\ensuremath{m(\mathcal D\mhyphen Interval\mhyphen Smaller\mhyphen Interval)}}\xspace}
\def\mICI{\mbox{\ensuremath{m(\mathcal D\mhyphen Interval\mhyphen Crossing\mhyphen Interval)}}\xspace}
\def\blfootnote{\gdef\@thefnmark{}\@footnotetext}
\begin{document}

\title{More on Decomposing Coverings by Octants}
\author{Bal\'azs Keszegh\thanks{Research supported by Hungarian National Science Fund (OTKA), under grant PD 108406 and by the J\'anos Bolyai Research Scholarship of the Hungarian Academy of Sciences.}\\
\and D\"om\"ot\"or P\'alv\"olgyi\thanks{Research supported by Hungarian National Science Fund (OTKA), under grant PD 104386 and by the J\'anos Bolyai Research Scholarship of the Hungarian Academy of Sciences.}
}


\date{}

\maketitle

\begin{abstract}
In this note we improve our upper bound given in \cite{KP} by showing that every $9$-fold covering of a point set in $\R^3$ by finitely many translates of an octant decomposes into two coverings, and our lower bound by a construction for a $4$-fold covering that does not decompose into two coverings. The same bounds also hold for coverings of points in $\R^2$ by finitely many homothets or translates of a triangle.
We also prove that certain dynamic interval coloring problems are equivalent to the above question.
\end{abstract}

\section{Introduction}
By an {\em octant}, in this paper we mean an open subset of $\R^3$ of the form $(-\infty,x)\times(-\infty,y)\times(-\infty,z)$ and the point $(x,y,z)$ is called the {\em apex} of the octant.
In \cite{KP} we have shown that every $12$-fold covering of a set in $\R^3$ by a finite number of octants decomposes into two coverings, i.e., if every point of some set $P$ is contained in at least $12$ members of a finite family of octants $\mathcal F$, then we can partition this family into two subfamilies, $\mathcal F=\mathcal F_1 \dot{\cup} \mathcal F_2$, such that every point of $P$ is contained in an octant from $\mathcal F_1$ and in an octant from $\mathcal F_2$.
We improve this constant in the following theorem, proved in Section~\ref{sec:upper}.

\begin{thm}\label{thm:9}
Every $9$-fold covering of a point set in $\R^3$ by finitely many octants decomposes into two coverings.
\end{thm}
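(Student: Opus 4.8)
The plan is to reduce Theorem~\ref{thm:9}, through a chain of by now fairly standard steps, to a one-dimensional coloring problem about intervals that change over time, and then to solve that problem with the constant~$9$.

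I would start with the usual reductions. First, it is enough to treat a \emph{finite} family of octants together with a \emph{finite} point set: only finitely many subfamilies of $\mathcal F$ occur as the set of octants containing a point of $P$, so it suffices to keep one witness point for each such ``type'', and the general case then follows by a routine compactness argument. Second, the apexes may be assumed to be in general position. Third, since the incidence structure of octants is invariant under central reflection (which turns octants into upward orthants) followed by a transposition of the roles of points and ranges, Theorem~\ref{thm:9} is equivalent to its dual form: \emph{every finite point set in $\R^3$ admits a $2$-coloring in which every octant containing at least $9$ of the points contains points of both colors}. I would keep whichever of the two formulations is more convenient at each stage.

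Next I would trade dimensions for time. Taking the second coordinate as time, an octant with apex $(a_1,a_2,a_3)$ becomes the planar quadrant $(-\infty,a_1)\times(-\infty,a_3)$ that is present exactly while $\mathrm{time}<a_2$, and a point $(p_1,p_2,p_3)$ becomes the planar point $(p_1,p_3)$ that issues a query at time $p_2$ and sees precisely the quadrants that are present then and contain it. Thus Theorem~\ref{thm:9} is equivalent to a \emph{dynamic quadrant} coloring statement in the plane. A second reduction of the same flavor, using that the point set captured by a quadrant is always closed downward---so that within the subfamily relevant to any one query the quadrants cut out intervals of a line---brings this down to a \emph{dynamic interval} coloring problem: intervals on a line appear and disappear as time passes, queries arrive over time, and one must $2$-color the intervals so that every query seeing at least $9$ intervals present at its time sees both colors. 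I would also record these equivalences, between $9$-fold octant coverings and the relevant dynamic interval coloring problems, as statements of independent interest.

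The real content is the last step: solving the dynamic interval coloring problem with constant $9$. I would process the events (appearances, disappearances, queries) in time order, maintaining a $2$-coloring of the intervals currently present, and, when a new interval appears, choose its color greedily---ideally never recoloring---so as to keep short the longest \emph{run} of consecutive equally-colored present intervals over any point of the line. Tracking the longest monochromatic run that reaches a query point from the left and from the right separately, one aims to show that a query meeting only one color can meet at most $8$ intervals, which is exactly the bound needed. Here lies the main obstacle: a crude run/potential estimate only reproduces the constant $12$ of \cite{KP}, and reaching $9$ requires a carefully amortized invariant that couples the effect of appearances with that of disappearances, together with a tight count of how a one-color run can be built on each side of a query point, so that the two sides share slack rather than each paying the full price. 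Finding an invariant strong enough to survive every type of event yet tight enough to yield $9$ is the crux of the argument.
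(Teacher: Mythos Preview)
Your reductions are broadly in the right spirit---the paper also passes to the dual formulation and then to a dynamic planar quadrant problem, and it even records the equivalence with the dynamic interval hypergraphs \dIBI and \dISI that you are reaching for. Two things are off, however. First, in the correct reduction the vertices (points, or equivalently intervals) only \emph{appear}, one per time step; nothing ``disappears''. Your model with intervals that both appear and disappear is not the one that arises from octants, and it is not clear your second reduction actually lands there. Second, and more seriously, the core step you propose---process events in time order and color each new object greedily, ``ideally never recoloring'', bounding monochromatic runs---cannot work: it is known that no online coloring achieves any finite $m$ here, and Cardinal et al.\ showed that even the semi-online model (color at any time after arrival, but never recolor) fails. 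So an amortized run-length invariant of the kind you sketch will not close the gap between $12$ and $9$; the obstruction is structural, not a matter of sharper bookkeeping.

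What the paper does instead is genuinely offline: rather than assigning colors as points arrive, it builds a \emph{forest} on the point set by processing the arrivals in order and applying four local operations (\emph{1Above}, \emph{2Comparable}, \emph{4Incomparable}, and the new \emph{1Box}) that maintain a ``staircase'' together with invariants (every above-point is good, every stair-point is almost-good, below-points lie in distinct components). No color is ever chosen during this phase; only at the very end is the forest properly two-colored. The invariants guarantee that any quadrant at any time containing $9$ points must contain an edge of the forest, hence both colors. The improvement from $12$ to $9$ comes precisely from the added \emph{1Box} operation and the supporting Lemmas showing it preserves the forest and below-point properties; the final counting splits a hypothetical monochromatic quadrant at its one or two stair-points and shows at most $8$ points can fit. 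Your proposal is missing this forest-building idea entirely, and without it the argument does not go through.
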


The improvement comes from a careful modification of our original proof for $12$-fold covering, while the framework remains essentially the same.  
The equivalent dual version (see \cite{KP,PPT11}) of this statement is that any finite set of points in $\R^3$ can be colored with two colors such that any octant containing at least $9$ points contains both colors.
It was discovered in a series of papers by Cardinal et al.\ \cite{colorful,colorful2} and by us \cite{KPself} that this bound implies several further results for which earlier only doubly exponential bounds were known \cite{KPmulti}.
We denote by $m_{oct}$ the smallest integer such that every $m_{oct}$-fold covering of a finite point set in $\R^3$ by octants decomposes into two coverings, thus Theorem~\ref{thm:9} states that $m_{oct}\le 9$.
Using this new bound, the degrees of the polynomials in the following theorems have also been improved.
(A diagram describing the connection between different coloring problems can be found later in Figure~\ref{fig:diagram}.)

\begin{thm}[Keszegh-P\'alv\"olgyi \cite{KPself}]
For any positive integer $k$ and any given triangle $T$, any finite set of points can be colored with $k$ colors such that any homothet of $T$ containing at least $m_{oct}\cdot k^{\log (2m_{oct}-1)}$, thus $\Omega(k^{4.09})$ points, contains all $k$ colors. 
\end{thm}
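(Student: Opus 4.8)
The plan is to transfer the problem from homothets of $T$ to octants, and then to bootstrap the $2$-colouring behind Theorem~\ref{thm:9} (that is, $m_{oct}\le 9$) up to a $k$-colouring by a recursive colour-doubling; throughout, $\log$ is base $2$. After an affine change of coordinates we may take $T$ to be the standard triangle $\{(x,y):x,y\ge 0,\ x+y\le 1\}$, whose homothets are precisely the regions $\{(x,y):x> a,\ y> b,\ x+y< c\}$. Lifting each planar point $(x,y)$ to $(-x,-y,x+y)\in\R^3$ places the lifted set on the plane $\{X+Y+Z=0\}$ and carries the homothets of $T$ exactly onto the traces of the octants of $\R^3$ on that plane, preserving all point--region incidences (this is the correspondence used in \cite{KP,PPT11}). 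Hence it suffices to prove that every finite $P\subset\R^3$ can be $k$-coloured so that every octant containing at least $m_{oct}\cdot k^{\log(2m_{oct}-1)}$ points of $P$ contains all $k$ colours. Write $m(k)$ for the least threshold for which such a $k$-colouring always exists; then $m(2)=m_{oct}$ by definition, and $m$ is non-decreasing, since merging colour classes turns a good $k$-colouring into a good $k'$-colouring with the same threshold for any $k'<k$.

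\textbf{The recursion.} The engine is the Doubling Lemma $m(2k)\le (2m_{oct}-1)\,m(k)$. Granting it, induction gives $m(2^{j})\le m_{oct}(2m_{oct}-1)^{\,j-1}$ for every $j\ge 1$, so for arbitrary $k\ge 2$, writing $j=\lceil\log k\rceil$ and using monotonicity,
\[
 m(k)\ \le\ m(2^{j})\ \le\ m_{oct}(2m_{oct}-1)^{\,j-1}\ <\ m_{oct}(2m_{oct}-1)^{\log k}\ =\ m_{oct}\,k^{\log(2m_{oct}-1)},
\]
which for $m_{oct}=9$ is exactly the bound in the statement ($\log 17\approx 4.09$). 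To prove the Doubling Lemma I would first establish a strong $2$-colouring lemma: every finite point set can be $2$-coloured so that every octant containing at least $(2m_{oct}-1)s$ points contains at least $s$ points of each of the two colours (for $s=1$ this is already implied by $m_{oct}\le 9$). Given it, a $2k$-colouring of $P$ is obtained by applying the strong lemma with $s=m(k)$ to split $P$ into two classes $P_A$ and $P_B$, and then recursively $k$-colouring $P_A$ with the palette $\{1,\dots,k\}$ and $P_B$ with $\{k+1,\dots,2k\}$, each with threshold $m(k)$: any octant with at least $(2m_{oct}-1)m(k)$ points meets $P_A$ in at least $m(k)$ points, hence contains all of the colours $1,\dots,k$, and for the same reason contains all of $k+1,\dots,2k$.

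\textbf{The main obstacle.} Everything above is bookkeeping; the content is the strong $2$-colouring lemma with a leading constant as small as $2m_{oct}-1$. This does not follow as a black box from ``$m_{oct}$-fold coverings of a point set by octants decompose into two coverings'', because removing a single $1$-fold sub-covering gives no bound on how far the covering multiplicity drops at a given point. In the dual formulation the lemma asserts that every $(2m_{oct}-1)s$-fold covering of a point set by octants splits into two $s$-fold coverings, and I would prove it by repeatedly peeling off a shallow sub-covering: show that every $m_{oct}$-fold covering by octants contains a $1$-fold sub-covering in which every point is covered at most $2(m_{oct}-1)$ times, and peel $s$ such sub-coverings off in succession. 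The multiplicity drops by at most $2(m_{oct}-1)$ at each peel, so after $s-1$ peels it is still at least $m_{oct}$ (this is the elementary inequality $(2m_{oct}-1)s-(s-1)\cdot 2(m_{oct}-1)=s+2(m_{oct}-1)\ge m_{oct}$), which keeps the next peel available, and after $s$ peels the leftover is still at least $s$-fold; putting the $s$ peeled $1$-fold coverings into one colour class and the leftover into the other gives the required $2$-colouring. Establishing the shallow-sub-covering statement — by revisiting the inductive proof of Theorem~\ref{thm:9} and the framework of \cite{KP} and keeping track of the depth of the sub-covering it produces, pinning it to $2(m_{oct}-1)$ (which is precisely what makes the final exponent $\log(2m_{oct}-1)$ rather than $\log(2m_{oct})$) — is where the difficulty lies.
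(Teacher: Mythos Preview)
This theorem is not proved in the present paper; it is quoted from \cite{KPself}, and the paper's contribution is only the improved input $m_{oct}\le 9$. Your recursive framework (the Doubling Lemma $m(2k)\le(2m_{oct}-1)\,m(k)$, bootstrapped from $m(2)=m_{oct}$) is correct and is exactly the scheme of \cite{KPself}. The gap is in your proof of the strong $2$-colouring lemma.

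You propose to obtain it by showing that the argument behind Theorem~\ref{thm:9} in fact produces a \emph{shallow} sub-covering of depth at most $2(m_{oct}-1)$, and then peeling $s$ times. That would mean re-opening the forest construction of Section~\ref{sec:upper} and extracting a quantitative depth bound it was never designed to give; you flag this yourself as ``where the difficulty lies'', and nothing in the construction suggests such control. As it stands, the proposal is an outline with its only non-formal step missing.

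The proof in \cite{KPself} bypasses this entirely via \emph{self-coverability} of triangles: for any $r$ points inside a homothet $H$ of $T$, one can cover $H$ by at most $2r+1$ smaller homothets of $T$ whose interiors miss all $r$ points. This is an elementary planar fact, independent of $m_{oct}$. Now use the ordinary $2$-colouring with threshold $m_{oct}$ as a pure black box. If $H$ contains $r$ red points, cover $H$ by $\le 2r+1$ sub-homothets avoiding them; each sub-homothet is then monochromatic blue, hence holds at most $m_{oct}-1$ points, so $H$ has at most $(2r+1)(m_{oct}-1)$ blue points. Thus $r\le s-1$ forces
\[
|P\cap H|\ \le\ (s-1)+(2s-1)(m_{oct}-1)\ =\ (2m_{oct}-1)s-m_{oct}\ <\ (2m_{oct}-1)s,
\]
which is precisely your strong lemma. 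So the constant $2m_{oct}-1$ comes from the self-coverability count $2r+1$, not from any depth estimate on a sub-covering, and contrary to what you write, the strong lemma \emph{does} follow from the $2$-colouring as a black box once this separate geometric ingredient is supplied.
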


\begin{thm}[Cardinal et al.\ \cite{colorful2}]
For any positive integer $k$, any $m_{oct}\cdot k^{\log (2m_{oct}-1)+1}$-fold covering of a subset of $R^3$ by finitely many octants can be decomposed into $k$ coverings.
\end{thm}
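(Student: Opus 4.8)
The plan is to derive the statement from the $k=2$ case --- that is, from Theorem~\ref{thm:9}, equivalently $m_{oct}\le 9$ --- by a divide-and-conquer on the set of colours, after dualizing. First I would pass to the dual colouring formulation used in \cite{KP,PPT11}: a family of octants covering a subset of $\R^3$ decomposes into $k$ coverings if and only if the dual point set admits a $k$-colouring in which every octant containing at least $m$ of the points (where $m$ is the covering multiplicity) sees all $k$ colours; the reduction from a covered subset of $\R^3$ with finitely many octants to a finite point set uses only the finiteness of the arrangement (and, for infinitely many octants, a routine compactness argument), and is lossless. In this language it suffices to produce a function $f$ with $f(k)\le m_{oct}\,k^{\log(2m_{oct}-1)+1}$ such that every finite $P\subseteq\R^3$ has a $k$-colouring in which every octant with at least $f(k)$ points of $P$ is rainbow; the base case $f(2)=m_{oct}$ is precisely the dual of Theorem~\ref{thm:9}.

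For the inductive step I would split the $k$ colours into two groups of sizes $\lceil k/2\rceil$ and $\lfloor k/2\rfloor$, use the first group to recursively colour a subset $A\subseteq P$ and the second to recursively colour $B:=P\setminus A$. For this to go through, every octant that is heavy in $P$ has to be heavy enough already in both $A$ and $B$; concretely, writing $t:=f(\lceil k/2\rceil)$, one needs a partition $P=A\,\dot{\cup}\,B$ such that every octant containing at least (roughly) $2(2m_{oct}-1)\,t$ points of $P$ contains at least $t$ points of $A$ and at least $t$ points of $B$. Granting such an amplification step, the recursion becomes $f(k)\le 2(2m_{oct}-1)\,f(\lceil k/2\rceil)$ up to lower-order slack from the ceilings, and unwinding it against $f(2)=m_{oct}$ yields $f(k)\le m_{oct}\,k^{\log\bigl(2(2m_{oct}-1)\bigr)}=m_{oct}\,k^{\log(2m_{oct}-1)+1}$; the extra factor $k$ in the exponent, relative to the homothet-colouring bound in the previous theorem, can be traced to this somewhat crude recursion.

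The entire difficulty is in the amplification step, and that is the part I expect to be genuinely hard. One wants to build the partition by applying the $k=2$ colouring of Theorem~\ref{thm:9} many times --- morally, peeling off about $2t$ hitting layers, half of them destined for $A$ and half for $B$ --- but a single application of the $k=2$ result only guarantees that a heavy octant meets both colour classes, with no control at all on how unbalanced the split is inside that octant, so a naive iteration may let the residual point set collapse inside some octant after just one round. The technical core of \cite{colorful2} is to arrange the iteration so that each round loses only a bounded number of points (in terms of $m_{oct}$) in every octant; the appearance of $2m_{oct}-1$ in the final bound signals that the right target is a loss of order $m_{oct}$ per round, which over the $\Theta(t)$ rounds produces a threshold of order $m_{oct}t$ with leading constant $2(2m_{oct}-1)$. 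Everything else --- the dualization, the compactness reduction for general subsets of $\R^3$, and the unwinding of the recursion --- is routine.
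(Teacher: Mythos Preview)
The paper does not prove this theorem at all: it is stated with attribution to Cardinal, Knauer, Micek and Ueckerdt~\cite{colorful2} and used only as a black box, so there is no proof in the present paper to compare your attempt against. The sole contribution of this paper to the statement is the improved input $m_{oct}\le 9$, which sharpens the resulting exponent.

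Regarding the content of your sketch: the overall architecture --- dualize, then recursively bisect the colour set, driven by an amplification lemma that splits the point set so that every heavy octant stays heavy in both halves --- is indeed the shape of the argument in~\cite{colorful2}, and your recursion $f(k)\le 2(2m_{oct}-1)\,f(\lceil k/2\rceil)$ does recover the exponent $\log(2m_{oct}-1)+1$. However, you explicitly decline to prove the amplification step, calling it ``the entire difficulty'' and ``genuinely hard'' and then deferring to~\cite{colorful2} for it. That is not a gap you have overlooked; it is a gap you have named and left open. A self-contained proof would have to supply precisely that splitting lemma (partition $P=A\,\dot\cup\,B$ so that every octant with order $(2m_{oct}-1)t$ points of $P$ has at least $t$ in each part), which in~\cite{colorful2} is obtained by a careful iterated application of the two-colour result. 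Without that, what you have written is an outline of the strategy rather than a proof.
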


This theorem also has the following straight-forward corollaries.

\begin{cor}\
\begin{itemize}
\item For any positive integer $k$, any finite set of points in $\R^3$ can be colored with $k$ colors so that any octant with $\Omega(k^{5.09})$ points contains all $k$ colors. 
\item For any positive integer $k$, any $\Omega(k^{5.09})$-fold covering of a finite point set in $\R^2$ by homothets of a triangle decomposes into $k$ coverings.
\item For any positive integer $k$, any $\Omega(k^{5.09})$-fold covering of a finite point set in $\R^2$ by bottomless rectangles decomposes into $k$ coverings.
\end{itemize}
\end{cor}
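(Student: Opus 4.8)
The plan is to obtain all three items from the theorem of Cardinal et al.\ stated above, after substituting the new value $m_{oct}\le 9$ of Theorem~\ref{thm:9}. First I would record the arithmetic: since $x\mapsto\log(2x-1)+1$ is increasing, $m_{oct}\le 9$ gives $\log(2m_{oct}-1)+1\le\log 17+1=5.0875\ldots<5.09$, and together with $m_{oct}\le 9$ this yields $m_{oct}\cdot k^{\log(2m_{oct}-1)+1}\le 9k^{5.09}$ for every $k\ge 1$. Hence the Cardinal et al.\ theorem gives, for every $k$, that every $9k^{5.09}$-fold covering of a subset of $\R^3$ by finitely many octants decomposes into $k$ coverings; this ``$\Omega(k^{5.09})$'' octant statement is what all three items reduce to. (Throughout, ``$\Omega(k^{5.09})$-fold'' means that some fixed constant times $k^{5.09}$ layers suffice uniformly in $k$; here the constant $9$ works.)

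For the first item I would invoke the primal--dual equivalence recorded in \cite{KP,PPT11} (its $k=2$ case is exactly the dual reformulation quoted just after Theorem~\ref{thm:9}, and the equivalence does not depend on the number of colors): being able to decompose every $m$-fold covering of a point set by octants into $k$ coverings is the same as being able to color any finite point set in $\R^3$ with $k$ colors so that every octant containing at least $m$ of those points receives all $k$ colors. Taking $m=9k^{5.09}$ gives the first item.

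For the other two items I would reduce the planar covering problems to coverings by octants. For bottomless rectangles the reduction is explicit and incidence-preserving: send a point $(p,q)\in\R^2$ to $(p,-p,q)\in\R^3$ and a bottomless rectangle $(a,b)\times(-\infty,c)$ to the octant with apex $(b,-a,c)$; then $(p,q)$ lies in the rectangle iff $p<b$, $a<p$ and $q<c$, iff the image of $(p,q)$ lies in the octant. So a $d$-fold covering of a finite $P\subseteq\R^2$ by bottomless rectangles is carried (bijectively on the ranges) to a $d$-fold covering of the finite image of $P$ by octants, and any decomposition of the latter into $k$ coverings pulls back. For homothets (or translates) of a fixed triangle $T$ I would instead use the reduction to octant coverings from \cite{KP} (this is the ``same bounds also hold'' remark of the abstract): writing a homothet of $T$ through its three real parameters turns ``a given point lies in this homothet'' into an intersection of three halfspaces in parameter space, i.e.\ into membership in a translate of a fixed simplicial cone, and a simplicial cone with apex is affinely equivalent to an octant; so a homothet covering becomes an octant covering and a decomposition of the latter transfers back. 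Running the $9k^{5.09}$-fold octant decomposition through these two reductions yields the second and third items.

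The only points that require care are the numerical inequality $\log 17+1<5.09$ and the check that each reduction preserves incidences (so that a decomposition into color classes really transfers back); I do not expect any genuine obstacle, which is precisely why this statement is a corollary.
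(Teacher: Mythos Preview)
Your proposal is correct and matches the paper's intent: the corollary is stated without proof as a ``straight-forward'' consequence of the Cardinal et al.\ theorem together with the bound $m_{oct}\le 9$, and the reductions you spell out (primal--dual equivalence for octants, and the passage from homothets of a triangle and bottomless rectangles to octants) are exactly the ones the paper invokes via the references \cite{KP,PPT11}. Your explicit computation $\log_2 17+1<5.09$ and the incidence-preserving maps are the right details to fill in.
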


Here a {\em bottomless rectangle} refers to a subset of the plane of the form $(x_1,x_2)\times (-\infty,y)$.
Note that it has been proved by Asinowski et al.\ \cite{A+13} that for any positive integer $k$, any finite set of points in $\R^2$ can be colored with $k$ colors such that any bottomless rectangle containing at least $3k-2$ points contains all $k$ colors (they also proved the lower bound that $3k-2$ cannot be changed to $1.67k$ in this statement).
A very general conjecture \cite[Problem 6.7 and after]{PPT11} implies that all the above parameters can also be replaced by $\Omega(k)$. 

We also give the following construction, which will be presented in Section~\ref{sec:lower}.

\begin{thm}\label{thm:lower}
For every triangle $T$ there is a finite point set $P$ such that for every two-coloring of $P$ there is a translate of $T$ that contains exactly $4$ points and all of these have the same color.
\end{thm}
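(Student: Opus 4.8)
The plan is to reduce to one convenient triangle, recast the statement as a hypergraph $2$-colouring question, and then construct an explicit finite point set whose ``exactly four points'' translates form a non-$2$-colourable hypergraph.

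\textbf{Reduction to a single triangle.} Every triangle is the image of every other triangle under an invertible affine map $\phi$, and $\phi$ carries the family of translates of a triangle $T_0$ onto the family of translates of $\phi(T_0)$ while preserving incidences, cardinalities and colourings; hence it suffices to prove the theorem for one triangle of our choice. I would take
\[
 T=\{(x,y)\in\R^2 : x\le 0,\ y\le 0,\ x+y\ge -1\},
\]
so that $T+(a,b)$ is exactly the set of points $(p,q)$ with $p\le a$, $q\le b$ and $p+q\ge a+b-1$ --- a corner quadrant with apex $(a,b)$ truncated by the anti-diagonal line $x+y=a+b-1$. (Via the standard primal--dual correspondence this will simultaneously give the promised $4$-fold octant example, since a coplanar family of points together with such truncated quadrants is a special coplanar case of octants in $\R^3$.) Passing to the primal picture --- a point set to be $2$-coloured, translates as hyperedges --- it is enough to exhibit a finite $P$ such that the $4$-element sets of the form $(T+t)\cap P$, $t\in\R^2$, form a hypergraph with no proper $2$-colouring; translates of $T$ meeting $P$ in a number of points other than $4$ impose no constraint.

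\textbf{The construction.} I would build $P$ from a long ``chain'' $A_0,\dots,A_N$ placed on a single anti-diagonal line, say $A_i=(i\varepsilon,-i\varepsilon)$; for such a chain the three defining inequalities of $T$ show at once that the translates restricted to the chain realize precisely the blocks of consecutive chain points, in particular every block of four. To this I would add a small set of further points placed just off that line (and possibly on a second, shorter anti-diagonal), chosen so that a translate may also capture a block of the chain together with one extra point. The spacing $\varepsilon$, the vertical offsets, and the index ranges in which the off-line points live are then tuned so that (i) a prescribed collection of $4$-subsets is realized \emph{exactly}, and (ii) the resulting $4$-uniform hypergraph is not properly $2$-colourable: the intended mechanism is that ``no monochromatic block of four'' forces an alternating-type colour pattern along the chain, which the off-line points then contradict; equivalently, the hypergraph should contain a copy of a small non-$2$-colourable $4$-uniform system. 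Since every non-$2$-colourable $4$-uniform hypergraph has at least $2^{3}$ edges, one should not expect $P$ to be tiny; I anticipate a configuration on the order of a dozen points, in the spirit of (and quantitatively improving) the lower-bound constructions of \cite{KP}, with the final non-$2$-colourability confirmed by a short case analysis exploiting the symmetry of the chain, or by computer.

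\textbf{Verification and the main obstacle.} With the points fixed, two routine steps remain: first, determine exactly which $4$-subsets of $P$ occur as $(T+t)\cap P$ --- a finite check, easy because the content of a translate is monotone in, and essentially an interval with respect to, its three slack parameters $a-p$, $b-q$, $p+q-a-b+1$; second, verify that this $4$-uniform hypergraph has no proper $2$-colouring. The ``exactly four'' clause is handled by a generic perturbation of $P$ together with the careful choice of index ranges mentioned above, so that the translates the argument relies on genuinely contain four, and not three or five, points of $P$. I expect \emph{this balancing} to be the real difficulty: the hyperedges must be realizable by translates of one fixed triangle --- a rigid, two-parameter family --- and at the same time have size exactly $4$, so the construction must prevent translates from swallowing too many points of $P$ (so that the needed $4$-edges exist) while also preventing them from catching too few (so that the translates which would make a $2$-colouring easy do not intervene). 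Meeting both requirements while still defeating every $2$-colouring is the crux; the affine reduction and the coordinate bookkeeping are straightforward.
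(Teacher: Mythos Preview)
Your proposal is a plan, not a proof: the entire content of the theorem is the existence of a specific point set, and you never produce one. You sketch an approach---a long chain on an anti-diagonal together with a few off-line points---but the actual configuration, the list of realized $4$-edges, and the non-$2$-colourability check are all deferred to ``a short case analysis \ldots\ or by computer''. Worse, the mechanism you rely on is wrong: forbidding monochromatic runs of four consecutive chain points is an extremely weak constraint and comes nowhere near forcing an ``alternating-type'' pattern (e.g.\ $RRBBRRBB\cdots$ and $RRRBRRRB\cdots$ avoid every monochromatic $4$-block). So essentially all the work would have to be done by your handful of off-line points, and you give no indication which $4$-subsets mixing chain and off-line points are actually realizable by translates of one fixed triangle, let alone that they suffice to defeat every $2$-colouring.

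The paper's construction is of a completely different, hierarchical shape. Three ``center'' points $p_1,p_2,p_3$ force, by pigeonhole, two of them (say $p_1,p_3$) to share a colour, say blue. Attached to this pair is a second-level gadget: one translate $T_{2,0}$ containing exactly four new points $p_{2,0,1},\dots,p_{2,0,4}$, so either $T_{2,0}$ is monochromatic or some $p_{2,0,j}$ is blue. Each such $p_{2,0,j}$ carries a third-level gadget of the same kind (a translate $T_{2,j,0}$ on four new points $p_{2,j,1},\dots,p_{2,j,4}$), and at the leaves each translate $T_{2,j,k}$ contains exactly the four points $p_1,p_3,p_{2,0,j},p_{2,j,k}$, all of which have been forced blue. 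The geometric work---making all these copies translates of a single $T$ and ensuring each meets $P$ in exactly the intended four points---is handled by placing successive levels at well-separated scales and is documented in the paper's figures. This tree-like forcing argument is the missing idea; a chain with perturbations does not visibly supply anything of the sort, and the affine reduction you do carry out, while correct, is the easy part.
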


This also implies $m_{oct}\ge 5$, as the intersection of octants with the plane $x+y+z=0$ give all homothets of the triangle $(-2,1,1),$ $(1,-2,1),$ $(1,1,-2)$, thus if we place the finite point set $P$ on this plane, then for any two-coloring of $P$ there will be an octant with exactly $4$ points, all of the same color.

We end the paper by discussing problems about coloring ordered intervals that turn out to be equivalent to the problem of decomposing octants, in Section~\ref{sec:int}.

\section{Proof of Theorem~\ref{thm:9}}\label{sec:upper}
The dual version of Theorem~\ref{thm:9} is that any finite set of points can be colored with two colors such that any octant containing at least $9$ points, contains both colors.
This is equivalent to the original problem.
We will prove the dynamic planar version of the dual problem, which is the following.
A {\em quadrant}  or {\em wedge} is a subset of the plane of the form $(-\infty,x)\times (-\infty,y)$.
We have to two-color a finite ordered planar point set $\{p_1,p_2,\ldots,p_n\}$ such that for every $t$ every quadrant that contains at least $9$ points from $P_t=\{p_1,\ldots,p_t\}$ contains both colors.
This dynamic planar version is also equivalent to the original problem, for the details, see \cite{KP,PPT11}.
(Briefly, the equivalence of the two problems is implied by the following containment-reversing bijections: an octant with apex $(x,y,z)$ is bijected to the point $(x,y)$ that ``appears'' at time $z$, while a point with coordinates $(a,b)$ is bijected to a mirrored quadrant with apex $(a,b)$, i.e., to the subset $(a,\infty)\times(b,\infty)$.)

A way to imagine this problem is that the points ``appear'' in order and at step $t$ we have to color the new point, $p_t$.
This is impossible to do in an online setting \cite{KNP}, i.e., without knowing in advance which points will come in which order.
Moreover, it was shown by Cardinal et al.\ \cite{colorful2} that such a coloring is even impossible in a so-called {\em semi-online} model, where points can be colored at any time after their arrival as long as every octant with $9$ (or any other constant number of) points contains both colors.
Our strategy, developed in \cite{KP}, builds a forest on the points such that any time any quadrant containing at least $9$ points contains two adjacent points from the same tree-component.
Therefore, after all the points arrived, any proper two-coloring of the forest 
will be such that any octant containing at least $9$ points contains both colors.


\begin{figure}
\begin{center}
\includegraphics[scale=0.7]{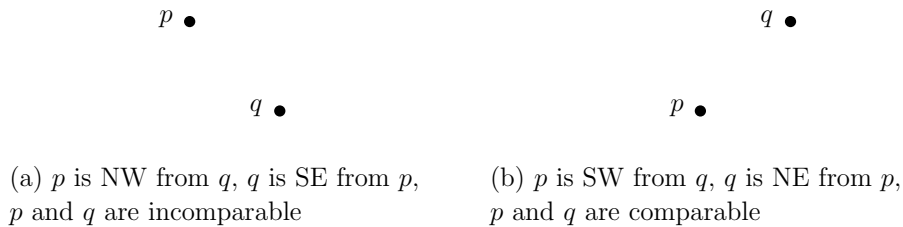}
\end{center}
\caption{Definition \ref{def:comparable}: the possible relations of two points.}
\label{fig:comparable}
\end{figure}

\begin{figure}
\begin{center}
\includegraphics[scale=0.6]{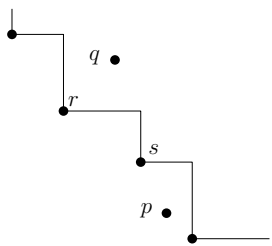}
\end{center}
\caption{Definition \ref{def:stair}: $p$ is below the staircase, $q$ is above the staircase, $r$ and $s$ are neighboring staircase points and $r$ is the left neighbor of $s$.}
\label{fig:stairdef}
\end{figure}
We start by introducing some notation (see also Figure \ref{fig:comparable}).

\begin{definition}\label{def:comparable}
We say that point $p=(p_x,p_y)$ is {\em northwest} (in short NW) from point $q=(q_x,q_y)$ if and only if $p_x<q_x$ and $p_y>q_y$.
In this case we also say that $q$ is {\em southeast} (in short SE) from $p$ and that $p$ and $q$ are {\em incomparable}.

Similarly, we say that point $p=(p_x,p_y)$ is {\em southwest} (in short SW) from point $q=(q_x,q_y)$ if and only if $p_x<q_x$ and $p_y<q_y$.
In this case we also say that $q$ is {\em northeast} (in short NE) from $p$ and that $p$ and $q$ are {\em comparable}.
\end{definition}

We can suppose that all points have different coordinates, as by a slight perturbation we can only get more subsets of the points contained in a quadrant (without losing others).

At any step $t$, we define a graph $G_t$ (which is actually a forest) on the points of $P_t$ and a vertex set $S_t$ of pairwise incomparable points called the {\em staircase}, recursively.
At the beginning $G_0$ is the empty graph and $S_0$ is the empty set.
A point on the staircase is called a {\em stair-point}.
Thus, before the $t^{th}$ step we have a graph $G_{t-1}$ on the points of $P_{t-1}$ and a set $S_{t-1}$ of pairwise incomparable points.
In the $t^{th}$ step we add $p_t$ to our point set obtaining $P_t$ and we will define the new staircase, $S_t$, and also the new graph, $G_t$, containing $G_{t-1}$ as a subgraph.
Before the exact definition of $S_t$ and $G_t$, we make some more definitions and fix some properties that will be maintained during the process (see Figure \ref{fig:stairdef}).

\begin{definition}\label{def:stair}
We say that a point $p$ of $P_t$ is {\em above} the staircase if there exists a stair-point $s\in S_t$ such that $p$ is NE from $s$.
If $p$ is not above or on the staircase, then we say that $p$ is {\em below} the staircase. A point below (resp.\ above) the staircase is called a {\em below-point} (resp.\ {\em above-point}).
At any time $t$, we say that two points of $S_t$ are {\em neighbors} if their $x$-coordinates are consecutive among the $x$-coordinates of the stair-points.
(Note that this does not mean that they are connected in the graph.)
We also say that $p$ is the {\em left} (resp.\ {\em right}) neighbor of $q$ if $p$ and $q$ are neighbors and the $x$-coordinate of $p$ is less (resp.\ more) than the $x$-coordinate of $q$.
\end{definition}

\begin{figure}
\begin{center}
\includegraphics[scale=0.8]{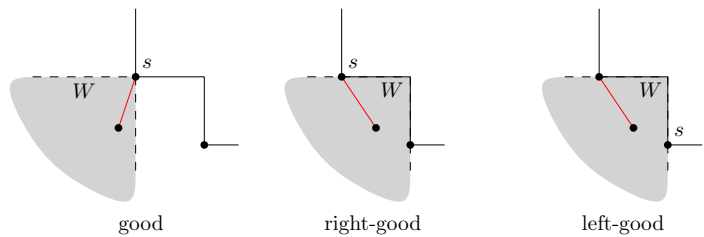}
\end{center}
\caption{The stair-point $s$ is good (resp.\ right-good, left-good) if $W$ contains two points that are connected by an edge.}
\label{fig:goods}
\end{figure}

\begin{definition}
In any step $t$, we say that a point $p$ is {\em good} if any wedge containing $p$ already contains two points connected by an edge, which are thus forced to get different colors (see Figure~\ref{fig:goods}). 
I.e., at any time after $t$, a wedge containing $p$ will contain points of both colors in the final coloring.
A stair-point $p$ is {\em almost-good} if for at least one of its neighbors, $q$, it is true that any wedge containing $p$ and $q$ contains two points connected by an edge of $G_t$.
Additionally, 
if $q$ is the left neighbor of $p$, then we say that $p$ is {\em left-good}, and
if $q$ is the right neighbor of $p$, then we say that $p$ is {\em right-good}. 
\end{definition}

Notice that the good points and the neighbors of the good points are always almost-good.
In fact, good points are also left- and right-good, and a left (resp.\ right) neighbor of a good point is right (resp.\ left) good.

Now we can state the properties we maintain at any time $t$.

\begin{enumerate}[\ \ Property 1.]
\item All above-points are good.
\item All stair-points are almost-good. 
\item All below-points are in different components of $G_t$.
\item $G_t$ is a forest.



\end{enumerate}

For $t=0$, all these properties are trivially true.
Whenever a new point arrives, we execute the following operations (see also Figure~\ref{fig:operations}) repeatedly as long as it is possible, in any order.
This will ensure that the properties remain true.

\begin{figure}
\begin{center}
\includegraphics[width=1\textwidth]{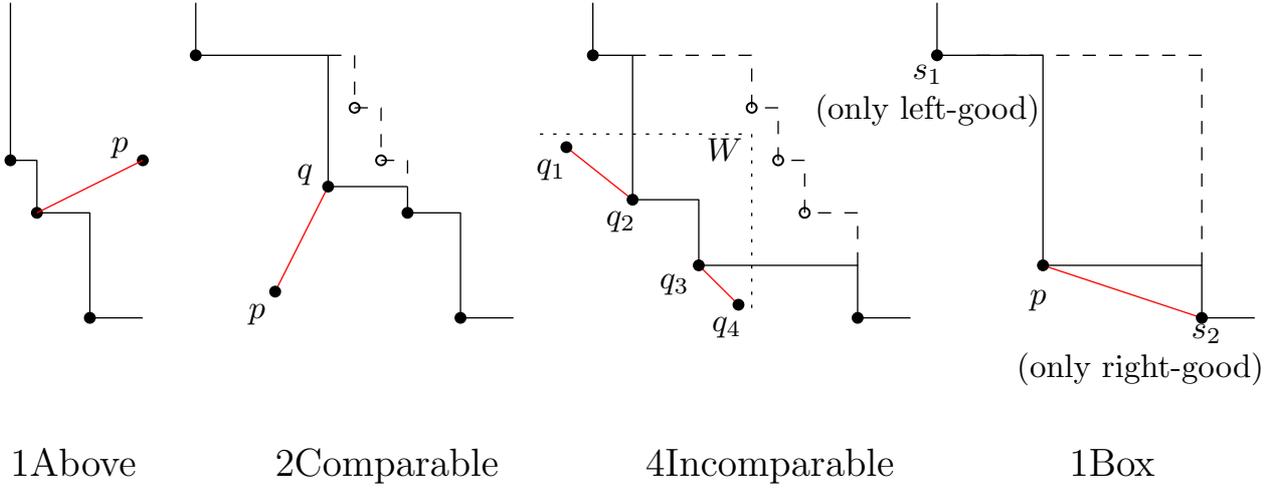}
\end{center}
\caption{The operations maintaining the properties.}
\label{fig:operations}
\end{figure}

\begin{description}
\item[1Above:]
If an above-point $p$ is not good, then we connect $p$ by an edge with a stair-point that is SW from $p$.

\item[2Comparable:] If for some below-points $p,q$ we have that $q$ is NE from $p$, then connect them by an edge and put $q$ on the staircase.

\item[4Incomparable:] Suppose there are no comparable below-points and there is a wedge $W$ that lies entirely below the staircase and contains four incomparable points, $q_1, q_2, q_3,$ and $q_4$, in order of their $x$-coordinates.
Then connect $q_1$ with $q_2$ and also $q_3$ with $q_4$, and put $q_2$ and $q_3$ on the staircase.

\item[1Box:] 
Suppose there are no comparable below-points, and suppose $s_1$ and $s_2$ are two neighboring stair-points, $s_1$ is NW from $s_2$, $s_1$ is left-good but not right-good 
while $s_2$ is right-good but not left-good and $p$ is a point in the rectangle defined by the two opposite vertices $s_1$ and $s_2$.
We connect $p$ and $s_2$, and put $p$ on the staircase.
\end{description}


Now we have to verify that the properties remain true after executing an operation.
Note that this was implicitly proved already in \cite{KP} for {\em 1Above}, {\em 2Comparable} and {\em 4Incomparable}.
First, we make the following observation, which implies that we will have to verify Property 2 only for the new stair-points.

\begin{obs}\label{remainsgood}
If a stair-point is left-good (resp.\ right-good, resp.\ good), then if after an operation this point is still a stair-point, then it remains left good (resp.\ right-good, resp.\ good).
\end{obs}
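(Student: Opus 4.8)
The only property of the operations that the proof uses is that each of them merely \emph{adds} edges to the graph and never deletes any; hence the graph just before an operation is a subgraph of the graph just after it, and the predicate ``every wedge containing a prescribed point, or a prescribed pair of points, contains two points joined by an edge'' is monotone: once true, it stays true. The ``good'' case of Observation~\ref{remainsgood} is then immediate, since goodness of a point $v$ is exactly this predicate applied to the single point $v$ and makes no reference to the staircase at all: $v$ and all the edges survive, so $v$ stays good (whether or not it stays a stair-point).

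For the ``left-good'' case, let $v$ be a stair-point which before the operation is left-good, witnessed by its left neighbor $u$ (every wedge containing $u$ and $v$ contains two points joined by an edge), and suppose $v$ is still a stair-point afterwards. If the left neighbor of $v$ is still $u$, monotonicity finishes the job; so assume $v$ acquires a new left neighbor $u'$ — either a stair-point newly inserted between $u$ and $v$, or an old stair-point that becomes adjacent to $v$ after some stair-points (including $u$) are deleted — and the task is to find two edge-joined points in every wedge containing $u'$ and $v$. I would run through the operations. \emph{1Above} changes neither the staircase nor any adjacency, so it cannot trigger this case. \emph{1Box} inserts one point between the neighboring stair-points $s_1,s_2$ and deletes nothing, so the only adjacencies that change are the right neighbor of $s_1$ and the left neighbor of $s_2$; but \emph{1Box} is applied only when $s_1$ is not right-good and $s_2$ is not left-good, so no stair-point that was left-good (resp.\ right-good) loses its left (resp.\ right) neighbor, and there is nothing to check beyond monotonicity.

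The remaining two operations do create new adjacencies. In \emph{2Comparable} the new stair-point $q$ is joined by an edge to the point $p$ lying to its southwest, and the stair-points it deletes are exactly those lying northeast of $q$; hence $q$ slots into precisely the gap they leave, so any surviving stair-point whose left or right neighbor changes gets $q$ as that neighbor. As $p$ is southwest of $q$, every wedge containing $q$ already contains $p$, hence the edge $pq$; a fortiori every wedge containing $v$ and $q$ does, so $v$ remains left-good, and the right-good version is the same statement on the other side. In \emph{4Incomparable} the two new stair-points $q_2,q_3$ (say $q_2$ left of $q_3$) lie in a wedge $W$ entirely below the old staircase. Any old stair-point $w$ with $x$-coordinate strictly between those of $q_2$ and $q_3$ does not lie in $W$ (a stair-point never lies in a wedge below the staircase), and since then $w_x$ is below the $x$-coordinate of the apex of $W$, its $y$-coordinate must be above that apex, so $w$ is northeast of $q_2$ and gets deleted; hence $q_2,q_3$ become neighbors, and the only old stair-points whose relevant neighbor changes are the one $r$ just left of $q_2$ (right neighbor $\to q_2$) and the one $s$ just right of $q_3$ (left neighbor $\to q_3$). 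Now $q_1$, joined to $q_2$, lies in $W$; since $r$ lies left of the apex of $W$ but not in $W$, its $y$-coordinate exceeds that of the apex, hence exceeds $(q_1)_y$, so every wedge containing $r$ and $q_2$ contains $q_1$, hence the edge $q_1q_2$. Symmetrically $s$ survives only if $s_y<(q_3)_y$, hence $s_y$ is below the apex of $W$, so $s\notin W$ forces $s_x$ above that apex and hence above $(q_4)_x$; thus every wedge containing $s$ and $q_3$ contains $q_4$, hence the edge $q_3q_4$. The right-good case is symmetric throughout, and along the way one checks the trivial point that a surviving stair-point which had a left (resp.\ right) neighbor still has one, so that ``left-good'' / ``right-good'' remains meaningful.

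I expect \emph{4Incomparable} to be the only place with real content: pinning down exactly which adjacencies change, plus the two small geometric facts used there — that a stair-point never lies inside a wedge lying entirely below the staircase, and the consequent placement of $q_1$ (resp.\ $q_4$) inside the relevant wedge. Everything else follows from edges never being deleted, together with the trivial bookkeeping of which points the inserted stair-points are joined to; and for \emph{1Above}, \emph{2Comparable} and \emph{4Incomparable} even this was already implicit in \cite{KP}, so the genuinely new part is just the easy analysis of \emph{1Box}.
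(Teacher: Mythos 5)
Your proof is correct and follows essentially the same route as the paper: the paper's two-line argument is precisely the claim that a surviving stair-point's newly acquired left (resp.\ right) neighbor is always good or right-good (resp.\ left-good), which, together with the monotonicity coming from edges never being deleted, gives the statement. You simply spell out the operation-by-operation verification that the paper leaves implicit (which adjacencies can change, and why the witnessing edges $pq$, $q_1q_2$, $q_3q_4$ lie in the relevant wedges), and you handle \emph{1Box} via its preconditions rather than via the blanket claim about new neighbors, which is if anything a slightly cleaner treatment of that case.
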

\begin{proof}
Notice that if a stair-point gets a new left-neighbor, then the new neighbor is either good, or right-good.
Similarly, if a stair-point gets a new right-neighbor, then the new neighbor is either good, or left-good.
\end{proof}

\begin{prop}\label{szorszalhasogatas}
After doing an operation Properties 1-4 remain true.
\end{prop}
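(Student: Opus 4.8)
The plan is to verify, operation by operation, that each of the four properties is preserved. I would organize the argument as four paragraphs, one per operation (\emph{1Above}, \emph{2Comparable}, \emph{4Incomparable}, \emph{1Box}), and within each paragraph check Properties 1--4 in turn. Throughout I would lean on Observation~\ref{remainsgood}, which reduces the verification of Property~2 to the \emph{newly created} stair-points, and I would repeatedly use the fact that good points and their neighbors are automatically almost-good (in fact left- and right-good on the appropriate side), as noted after the definition of \emph{good}. A useful preliminary remark I would make at the start: no operation ever deletes an edge or moves a point off the staircase downward, so Property~3 can only be threatened when a below-point becomes newly connected to something or when a point newly becomes a below-point — but in all our operations every point that leaves the below-region goes \emph{onto} the staircase, never above it without being good, so the only real content in Property~3 is tracking which below-points get joined by new edges.

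For \emph{1Above}: the point $p$ is above the staircase, so it is not a below-point and Property~3 is untouched; adding one edge from $p$ to an SW stair-point $s$ cannot create a cycle because $p$ had no edges before (it was an above-point that was not yet good, and above-points only ever receive edges through this very operation, after which they are good), so Property~4 holds; Property~1 is the point of the operation — after connecting, every wedge containing $p$ contains $s$ and its neighbor along the staircase on the relevant side, giving an edge, wait, more carefully: every wedge containing $p$ contains the SW stair-point $s$, and since $s$ is almost-good there is an edge forced in every wedge containing $s$ and one of its neighbors; one argues the wedge containing $p$ is large enough to also contain that neighbor, so $p$ becomes good; Property~2 needs checking only if the staircase changed, which it does not in \emph{1Above}, so by Observation~\ref{remainsgood} nothing to do. For \emph{2Comparable}: $q$ is NE from $p$, both below-points; before the operation they were in different components (Property~3), so the new edge $pq$ cannot close a cycle, giving Property~4; $q$ moves onto the staircase, so Property~3 now concerns only the remaining below-points, and since we removed $q$ from that set and only added the edge $pq$ (with $q$ now a stair-point), the below-points are still in distinct components; Property~1 (above-points) is unaffected; Property~2 requires showing the new stair-point $q$ is almost-good — the edge $pq$ with $p$ SW of $q$ witnesses that every wedge containing $q$ contains $p$ too, hence contains the edge $pq$, so $q$ is in fact good, hence almost-good.

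For \emph{4Incomparable}: here four incomparable below-points $q_1,\dots,q_4$ inside a wedge $W$ below the staircase get the edges $q_1q_2$ and $q_3q_4$, and $q_2,q_3$ go onto the staircase. Property~4: the two new edges join pairwise-distinct below-points that were in distinct components by Property~3, and the two edges are vertex-disjoint, so no cycle; Property~3: $q_2,q_3$ leave the below-set, the remaining below-points $q_1,q_4$ got exactly one new edge each to a now-stair-point, so they remain in distinct components (and distinct from all other below-points, since their only new neighbors are off the staircase's below-region); Property~1: above-points untouched; Property~2: the new stair-points $q_2$ and $q_3$ — here the key is that $q_2$ is right-good because the edge $q_1 q_2$ forces a bichromatic pair in every wedge containing $q_2$ and its left neighbor (one checks $q_1$ lies in that wedge since $q_1$ is to the left and below), and symmetrically $q_3$ is left-good via the edge $q_3 q_4$. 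For \emph{1Box}, the delicate case: $s_1$ is left-good-but-not-right-good, $s_2$ is right-good-but-not-left-good, $p$ lies in the box spanned by $s_1, s_2$, and we add the edge $p s_2$ and put $p$ on the staircase between $s_1$ and $s_2$. Property~4: $p$ was a below-point (it is below the old staircase, being inside the box under the "step" at $s_1,s_2$), in its own component by Property~3, and $s_2$ is a stair-point, so adding $ps_2$ creates no cycle provided $p$ and $s_2$ were in different components — I would argue $s_2$'s component and $p$'s cannot coincide because $p$ is a below-point and $s_2$, as a stair-point, connects only to things that keep it almost-good, but one must actually verify no path already joins them; Property~3: $p$ leaves the below-set, so fine; Property~1: above-points untouched; Property~2 is the real work — I must show $p$ (new stair-point) is almost-good, and that $s_1, s_2$ remain almost-good with their new neighbor $p$. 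The edge $p s_2$ makes $p$ left-good relative to... actually $p$'s right neighbor is $s_2$, and the edge $p s_2$ with appropriate geometry shows every wedge containing $p$ and $s_2$ has the forced edge, so $p$ is right-good; for $s_1$, its new right neighbor is $p$, and I must show every wedge containing $s_1$ and $p$ contains a forced edge — this is where I would use that $s_1$ is left-good (so wedges containing $s_1$ and its left neighbor are fine) together with the placement of $p$ and the non-right-goodness of $s_1$ being exactly what \emph{1Box} is designed to repair; symmetrically $s_2$ gains left-neighbor $p$ and the edge $p s_2$ directly handles wedges containing $s_2$ and $p$.

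The main obstacle I expect is the \emph{1Box} case, specifically verifying Property~2 for the stair-points $s_1$ and $s_2$ with their new neighbor $p$: one has to carefully use the asymmetry in the hypotheses (each $s_i$ good on one side only) and a geometric argument that any wedge containing $s_1$ and $p$ — or $s_2$ and $p$ — necessarily contains the relevant previously-forced edge, and dually that \emph{1Box} does not accidentally destroy the almost-goodness already present. A secondary subtlety, present in all four cases, is confirming that the newly created edges genuinely lie inside every wedge that contains the endpoint in question — i.e.\ translating "$W$ contains $p$" into "$W$ contains the forced edge" — which uses only that wedges are downward-closed to the southwest, but must be stated cleanly once and reused.
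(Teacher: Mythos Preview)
Your overall plan (check each operation, each property) matches the paper, and your handling of \emph{2Comparable} is fine. But you have misidentified where the real difficulty lies, and the genuine gap in your proposal is exactly there.

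\medskip
\textbf{The actual hard case is \emph{1Box}, Properties~3 and~4 --- not Property~2.} You write that for Property~4 one must ``actually verify no path already joins'' $p$ and $s_2$, and then move on. This is the heart of the whole proposition. A priori nothing stops $s_2$'s tree-component from already containing the below-point $p$ (Property~3 allows $T_{s_2}$ to contain one below-point), and if it does, the new edge $ps_2$ closes a cycle. The paper handles this with a separate inductive statement (their Lemma~\ref{remainslower}): if a stair-point $s$ is right-good but not left-good, then the unique below-point in its component, if any, has smaller $y$-coordinate than $s$. Since $p$ lies in the box and hence strictly above $s_2$, it cannot be that below-point, so $p\notin T_{s_2}$. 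Proving this lemma in turn requires tracking how the below-point of a component can change over time (their Lemma~\ref{remainslost} plus a case analysis over the operations). None of this machinery appears in your plan.

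By contrast, your announced ``main obstacle'' --- Property~2 for $s_1$ and $s_2$ after \emph{1Box} --- is not an obstacle at all. Observation~\ref{remainsgood} already tells you that $s_1$ keeps its old left neighbour and stays left-good, and $s_2$ keeps its old right neighbour and stays right-good; only the new stair-point $p$ needs checking, and the fresh edge $ps_2$ makes $p$ right-good immediately.

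\medskip
\textbf{Two smaller errors.} In \emph{1Above}, your argument that $p$ becomes good via the almost-goodness of $s$ and a neighbour of $s$ does not work: a wedge containing $p$ need not contain any neighbour of $s$. The correct (and simpler) reason is that $s$ is SW of $p$, so every wedge containing $p$ contains the new edge $ps$ itself. In \emph{4Incomparable}, you have the sides reversed: since $q_1,\dots,q_4$ are pairwise incomparable and listed by increasing $x$-coordinate, $q_1$ is NW (not ``to the left and below'') of $q_2$; the edge $q_1q_2$ makes $q_2$ \emph{left}-good (any wedge containing $q_2$ and its left stair-neighbour must rise above the apex of $W$ and hence contains $q_1$), and symmetrically $q_3$ becomes \emph{right}-good.
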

\begin{proof}
We check each operation and each property.

\begin{itemize}
\item[] {\em 1Above}: For Property 1, notice that $p$ necessarily has to be the newly arrived point, $p_t$, and it becomes good after the operation.
Property 2 obviously remains true.
For Properties 3 and 4 we use that as $p$ is the newly arrived point, before the operation it is not connected to any other point.

\item[] {\em 2Comparable}: Property 1 remains true as only points NE from $q$ became above-points and thus they all have the edge $pq$ SW from them.
Property 2 remains true as the only new stair-point is $q$, which became good.
Properties 3 and 4 remain true as before the operation $p$ and $q$ were in different tree-components, which are then connected.

\item[] {\em 4Incomparable}: Property 1 remains true as, using that no below-points were comparable, any point that became an above-point has either both $q_1$ and $q_2$, or both $q_3$ and $q_4$ SW from it.
Property 2 remains true as there are only two new stair-points: $q_2$ becomes left-good and $q_3$ becomes right-good.
Properties 3 and 4 remain true as before the operation $q_1, q_2, q_3, q_4$ were all in different tree-components, and after the operation two-two of these are connected in a suitable way.

\item[] {\em 1Box}: Property 1 remains true as, using that no below-points were comparable, there are no new above-points.
Property 2 remains true as $p$ is the only new stair-point and it is right-good.
Properties 3 and 4 remain true if $p$ and $s_2$ are in different tree-components.
This will be proved in Lemma~\ref{remainslost} and Lemma~\ref{remainslower}.
\end{itemize}


\begin{lem}\label{remainslost}
If there is no below-point in the tree-component $T_s$ of a stair-point $s$, then this remains true, i.e., later during the process the component containing $s$ will never contain a below-point.
\end{lem}
\begin{proof}
Suppose that there is no below-point in the tree-component $T_s$ of a stair-point $s$. This trivially remains true when a new point arrives (before doing operations). Then a simple case analysis shows that none of the operations can introduce a below-point to the tree-component $T_s$ of a stair-point $s$:

\begin{itemize}
\item[] {\em 1Above}: Either $T_s$ does not change or only $p$ (an above-point) is added to it.
\item[] {\em 2Comparable}: Only the components of the below-points $p$ and $q$ are joined, as $T_s$ must be a different tree from these two (as it contained no below-point), $T_s$ does not change.
\item[] {\em 4Incomparable}: Only the components of the below-points $q_1,\dots,q_4$ change, as $T_s$ must be a different tree from these (as it contained no below-point), $T_s$ does not change. 
\item[] {\em 1Box}: Either $T_s$ does not change or $s_2\in T_s$ in which case $T_s$ is joined with the tree $T_p$ containing $p$. In the latter case in $T_p$ the only below-point was $p$ (by Property 3), which after the operation becomes a stair-point, so the new tree containing $s$, $T_s'=T_s\cup T_p$ still does not contain a below-point after this operation.\qedhere
\end{itemize}
\end{proof}

\begin{lem}\label{remainslower}
Suppose $s$ is a stair-point and $b$ is a below-point in the tree-component $T_s$ containing $s$.
If $s$ is right-good but not left-good, then $b$ is lower than $s$, that is, $b$ has a smaller $y$-coordinate than $s$.
\end{lem}
\begin{proof}
By Observation~\ref{remainsgood}, we know that if $s$ is right-good but not left-good, then this was also true at the time when $s$ became a stair-point. A simple case analysis of the operations shows that at the time when $s$ becomes a stair-point, the statement holds:

\begin{itemize}
\item[] {\em 1Above}: This is not a possible case as in this case no point becomes a stair-point.
\item[] {\em 2Comparable}: This is not a possible case as necessarily $s$ plays the role of $q$ in the operation, in which case $s$ is good and thus also left-good, contradicting our assumption on $s$.
\item[] {\em 4Incomparable}: Necessarily $s$ plays the role of $q_3$ in the operation, thus $b$ is $q_4$ and so it is lower than $s$, as required.
\item[] {\em 1Box}: Necessarily $s$ plays the role of $p$ in the operation, thus after the operation the below-point in $T_s$ is necessarily the point which was the below point of $T_{s_2}$ before the operation. As $s_2$ was right-good but not left-good, by induction this below point was lower than $s_2$, thus it is also lower than $s=p$, as required.
\end{itemize}
If after some step $T_s$ stops to have a below-point, then by Lemma~\ref{remainslost} this remains true and so there can be no below-point $b$ in $T_s$ as required by the lemma and we are done.
Otherwise, if $T_s$ still has a below-point, then by Property 3 there is exactly one below-point $b$ in $T_s$, it is lower than $s$, and we have to check that after any operation the below-point in $T_s$ remains below $s$. 
The only operation in which the below-point $b$ in $T_s$ could go higher is {\em 4Incomparable} such that $b$ plays the role of $q_2$.
If $b=q_2$ is SW from $s$, then $s$ goes above the staircase, thus stops being a stair-point as required by the lemma and we are done.
If $b=q_2$ is SE from $s$, then the whole wedge $W$ must be lower than $s$, and then the new below-point in $T_s$ becomes $q_1$, also lower than $s$.
This finishes the proof of the lemma and also of Proposition~\ref{szorszalhasogatas}.
\end{proof}\renewcommand{\qedsymbol}{}\end{proof} 
\vspace{-1cm}

\begin{figure}
    \centering
    \subfigure[]{\label{split1}
        \includegraphics[scale=0.8]{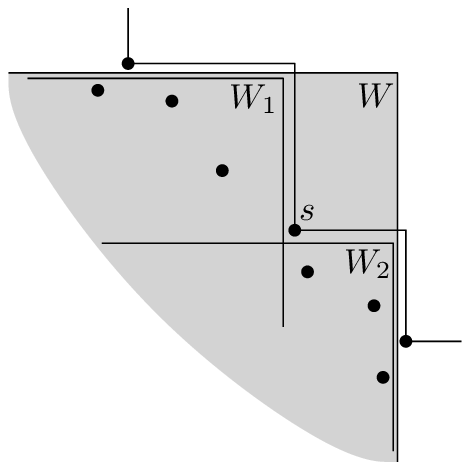}             
				}        
    \hskip 20mm
    \subfigure[]{\label{split2}
        \includegraphics[scale=0.8]{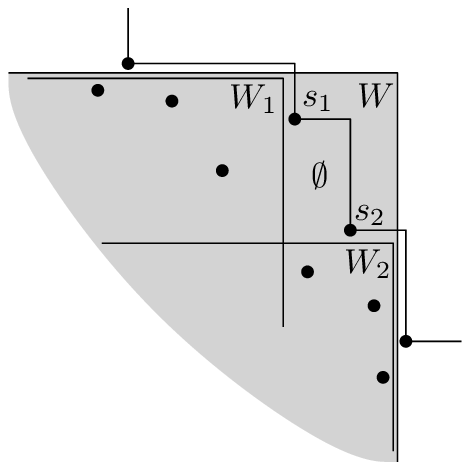}        
				}
   \caption{A monochromatic wedge can contain at most $8$ points.}       
\end{figure}

Now we can finish the proof of the dynamic dual version, and thus also of Theorem~\ref{thm:9}, by showing that taking any (partial) two-coloring of the forest $G_t$ constructed using the above operations, at all times (i.e., for every prefix set $\{p_1,\dots ,p_t\}$ of the point set), any quadrant $W$ containing at least $9$ points contains both colors.
Fix the time after the arrival of the point $p_t$ (and after we repeatedly applied the operations as long as possible). Thus no more operations can be applied, in particular there are no two comparable below-points otherwise we could apply operation {\em 2Comparable}.
If $W$ contains an above-point, it contains both colors as all above-points are good.
If $W$ contains at most one stair-point, $s$, then by ``splitting'' $W$ at $s$ (see Figure~\ref{split1}), we get two quadrants that do not contain any stair-point, but contain all other points that $W$ contains.
One of these two quadrants must contain at least $4$ below-points, thus we could apply operation {\em 4Incomparable}, a contradiction.
If $W$ contains at least $3$ stair-points, then it contains a stair-point $s$ such that both neighbors of $s$ are also in $W$.
As every stair-point is almost-good, $W$ must contain both colors.
Finally, if $W$ contains exactly two (neighboring) stair-points, $s_1$ NW from $s_2$, then the only way for $W$ to be monochromatic is if $s_1$ is left-good but not right-good and $s_2$ is right-good but not left-good.
Therefore, there can be no points in the rectangle formed by $s_1$ and $s_2$, as otherwise we could apply operation {\em 1Box}, a contradiction.
At least one of the two quadrants obtained by ``splitting'' $W$ at $s_1$ and $s_2$ (see Figure ~\ref{split2}), must contain at least $4$ below-points, thus we could apply operation {\em 4Incomparable}, a contradiction.

\section{Indecomposable $4$-fold covering}\label{sec:lower}
Here we construct for any triangle $T$ a finite point set $P$ such that for every two-coloring of $P$ there is a translate of $T$ that contains exactly $4$ points and all of these have the same color.
As the construction is quite hard to describe precisely, we refer to Figure~\ref{fig:haromszogre4nemeleg} and Figure~\ref{fig:haromszogre4nemelegzoom} for the details and give only a more informal description below.
With a simple case analysis, we will show that in any two-coloring, there is a monochromatic triangle with exactly $4$ points.

\begin{figure}
\begin{center}
\includegraphics[width=1\textwidth]{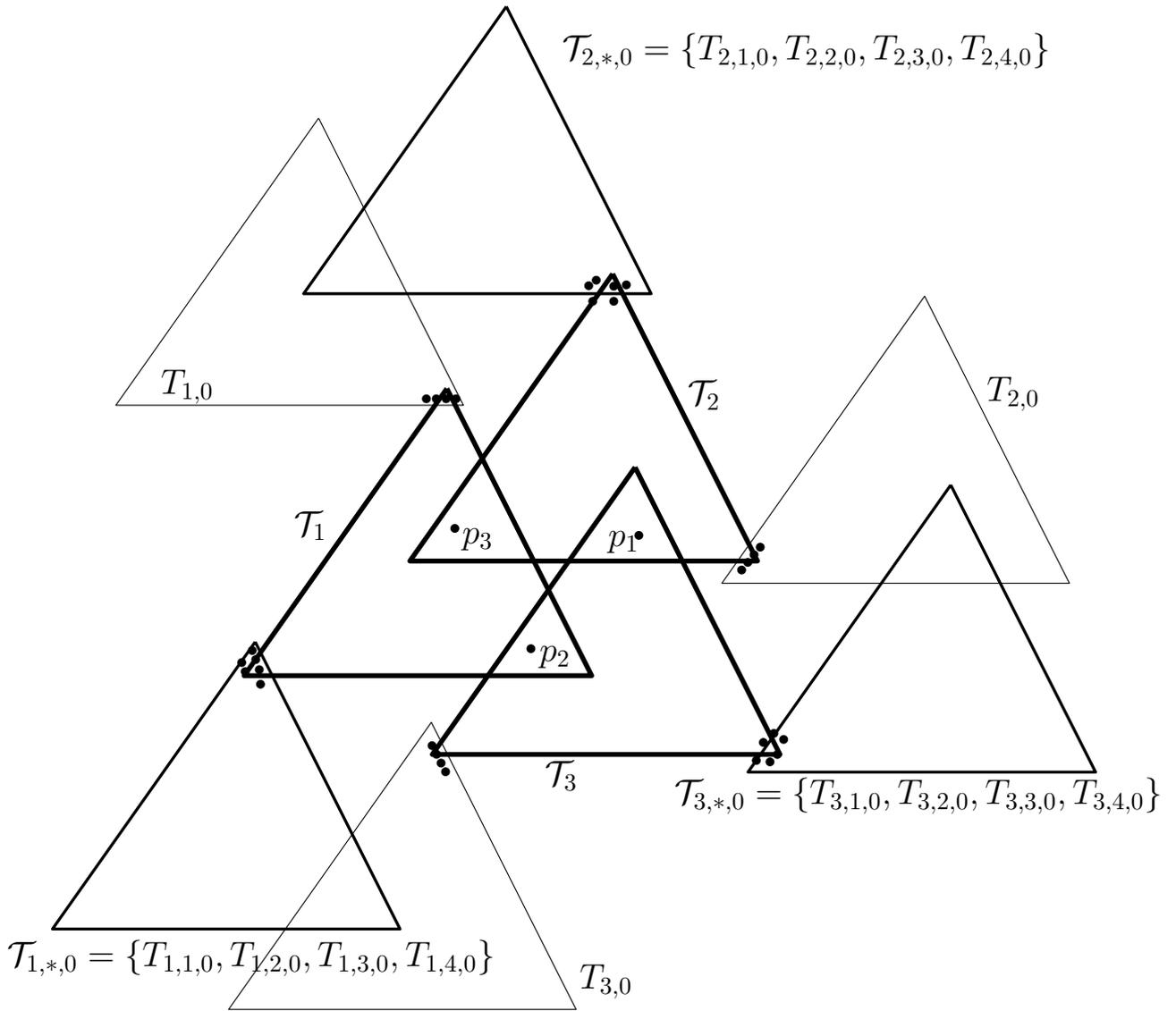}
\end{center}
\caption{The construction in which there is always a monochromatic triangle with $4$ points.}
\label{fig:haromszogre4nemeleg}
\end{figure}

\begin{figure}
\begin{center}
\includegraphics[width=1\textwidth]{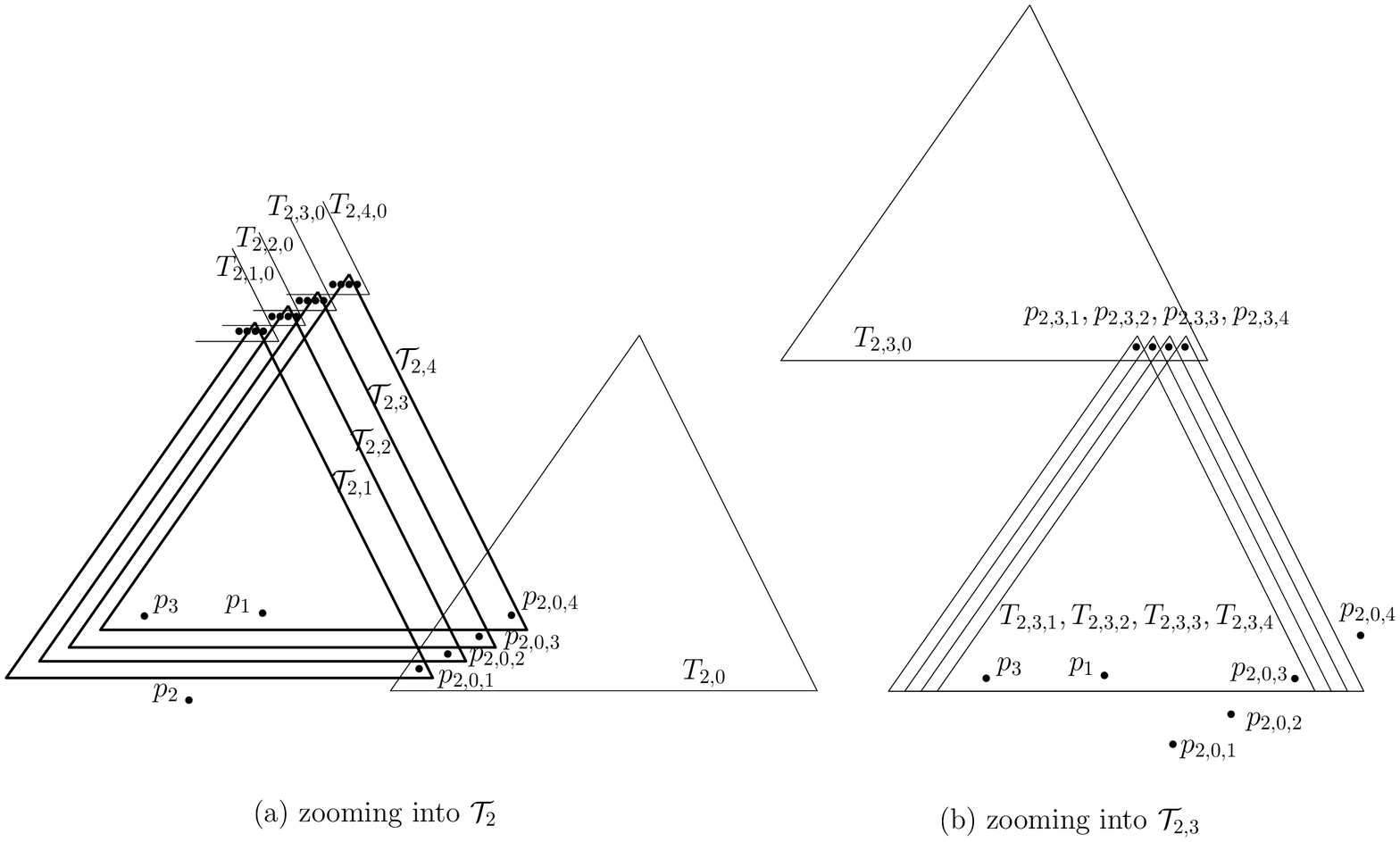}
\end{center}
\caption{Zooming into the construction in two stages.}
\label{fig:haromszogre4nemelegzoom}
\end{figure}

On the top part of the figure is the ``big picture'' that shows what the construction looks like from far.
The thicker triangles denote families of triangles that are very close to each other.
The center part has only three points, $p_1$, $p_2$ and $p_3$.
Two of these, without loss of generality $p_1$ and $p_3$, must receive the same color, say blue.

After this we look more closely at the family $\mathcal T_2$ that consists of the subfamilies $\mathcal T_{2,1}$, $\mathcal T_{2,2}$, $\mathcal T_{2,3}$ and $\mathcal T_{2,4}$, see the bottom-left figure.
Unless the triangle $T_{2,0}$ is monochromatic, at least one of $p_{2,0,1}$, $p_{2,0,2}$, $p_{2,0,3}$ and $p_{2,0,4}$ must be blue.
Without loss of generality we suppose $p_{2,0,3}$ is blue.

After this we look more closely at the family $\mathcal T_{2,3}$ that consists of the triangles $T_{2,3,1}$, $T_{2,3,2}$, $T_{2,3,3}$ and $T_{2,3,4}$, see the bottom-right figure.
Unless the triangle $T_{2,3,0}$ is monochromatic, at least one of $p_{2,3,1}$, $p_{2,3,2}$, $p_{2,3,3}$ and $p_{2,3,4}$ must be blue.
But if $p_{2,i,3}$ is blue, then $T_{2,i,3}$ is monochromatic.
This finishes the proof.

\section{Coloring dynamic hypergraphs defined by intervals}\label{sec:int}
In this section we investigate two-coloring geometric {\em dynamic hypergraphs} defined by intervals on a line. The vertices of a dynamic hypergraph are ordered and they ``appear'' in this order.
Knowing in advance the whole ordered hypergraph, our goal is to color the whole vertex set such that at all times any hyperedge restricted to the vertices that have ``arrived so far'' is non-monochromatic if it contains at least $m$ vertices that have arrived so far.
This model is also called quasi-online in \cite{KNP}.
The exact definitions are as follows.

\begin{definition}
For a hypergraph $\HH(V,{\cal E})$ with an order on its vertices, $V=\{v_1,v_2,\dots,v_n\}$, we define the {\em dynamic closure} of $\cal H$ as the hypergraph on the same vertex set and with hyperedge set $\{E\cap \{v_1,v_2,\dots,v_i\}:E\in{\cal E},1\le i\le |V|\}$. A hypergraph with an order on its vertices is {\em dynamic} if it is its own dynamic closure.
When $\HH$ is a hypergraph family, \dH is the hypergraph family that contains all the dynamic closures of the family $\HH$ (with all orderings of their vertex sets).

A hypergraph is $m$-proper two-colorable if $V$ can be two-colored such that for every $i$ and $E\in{\cal E}$ if $|E|\ge m$, then $E$ contains both colors.
For a family of (ordered) hypergraphs, $\{\HH_i\mid i\in I\}$, we define the {\em midriff} of the family, $m(\{\HH_i\mid i\in I\})$, as the smallest number $m$ such that every (ordered) hypergraph in the family is $m$-proper two-colorable.
\end{definition}

\begin{obs}\label{obs:todynamic}
If for two non-ordered hypergraph families, $\mathcal A$ is a subfamily of $\mathcal B$, then for their dynamic closures, $\mathcal{D\mhyphen A}$ is a subfamily of $\mathcal{D\mhyphen B}$.
\end{obs}

Now we look at the hypergraphs that we have studied in the earlier sections using the above terminology.

\begin{definition}
\end{definition}
\begin{description}
\item[\PQ:] Vertices: a finite set of points from $\R^2$;\\
Hyperedges: subsets of the vertices (points) contained in a quadrant (of the form $(-\infty,x)\times (-\infty,y)$).

\item[\QP:] Vertices: a finite set of quadrants;\\
Hyperedges: subsets of the vertices (quadrants) that contain a point from $\R^2$.

\item[\PO:] Vertices: a finite set of points from $\R^3$;\\
Hyperedges: subsets of the vertices (points) contained in an octant (of the form $(-\infty,x)\times (-\infty,y)\times (-\infty,z)$.

\item[\OP:] Vertices: a finite set of octants;\\
Hyperedges: subsets of the vertices (octants) that contain a point from $\R^3$.
\end{description}

We know that $m_{oct}=\mOP$ by definition and as we noted already in the previous section, in \cite{KP,PPT11} it was shown (not using this terminology) that decomposing octants is equivalent to its dual problem, i.e., \OP$=$\PO and $m_{oct}=\mOP=\mPO$ and that \PO is the same as the (ordered) hypergraph family \dPQ (regarding the hypergraphs in it without the vertex orders). Also, \QP$=$\PQ. 
Summarizing:

\begin{obs}[\cite{KP,PPT11}]
\dPQ equals \PO and therefore\\ $m_{oct}=\mPO=\mPQ$.
\end{obs}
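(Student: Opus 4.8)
The plan is to set up an explicit back-and-forth between point sets in $\R^3$ carrying the \PO hypergraph and finite \emph{ordered} point sets in $\R^2$ carrying the dynamic closure of the \PQ hypergraph, verify that under this correspondence the two hyperedge sets literally coincide, and then read off the claim about midriffs by chaining it with the equalities $m_{oct}=\mOP=\mPO$ recalled just before the statement. Throughout I will assume general position, i.e.\ that within each coordinate all occurring values are pairwise distinct; by the perturbation argument already used in the paper this costs nothing, since degenerate configurations only yield sub-hypergraphs of general-position ones and hence affect none of the midriffs in question.

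For the inclusion $\PO\subseteq\dPQ$: given $V=\{w_1,\dots,w_n\}\subseteq\R^3$, reindex so that the third coordinates increase, and let $v_j\in\R^2$ be the projection of $w_j=(x_j,y_j,z_j)$ to its first two coordinates, taken in this order. For an octant $O=(-\infty,x)\times(-\infty,y)\times(-\infty,z)$, since the $z_j$ increase the set $\{w_j : z_j<z\}$ is a prefix $\{w_1,\dots,w_i\}$ of the order, so $\{w_j\in O\}=Q\cap\{v_1,\dots,v_i\}$ with $Q=(-\infty,x)\times(-\infty,y)$; conversely every $Q\cap\{v_1,\dots,v_i\}$ arises this way, by choosing $z$ strictly between $z_i$ and $z_{i+1}$ (and below $z_1$ for $i=0$). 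Hence the \PO hypergraph of $V$ equals the dynamic closure of the \PQ hypergraph of $(v_1,\dots,v_n)$. For the reverse inclusion $\dPQ\subseteq\PO$: given an ordered set $(v_1,\dots,v_n)$ in $\R^2$ with $v_j=(x_j,y_j)$, lift it to $w_j=(x_j,y_j,j)\in\R^3$; then for $O=(-\infty,x)\times(-\infty,y)\times(-\infty,z)$ one has $\{w_j\in O\}=Q\cap\{v_j : j<z\}$ with $Q=(-\infty,x)\times(-\infty,y)$ and $\{v_j : j<z\}$ a prefix of the order, and as $Q$ and $z$ range freely this sweeps out exactly the hyperedge set of the dynamic closure. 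This establishes $\dPQ=\PO$ as (unordered) hypergraph families.

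It remains to deduce the midriff identities. Whether a hypergraph is $m$-proper two-colorable depends only on its hyperedge set and not on any ordering of its vertices, so the equality of families $\dPQ=\PO$ immediately gives $\mPO=\mPQ$. Combining this with the already-recorded fact $m_{oct}=\mOP=\mPO$ (the containment-reversing duality between covering by octants and its dual), we obtain $m_{oct}=\mPO=\mPQ$, as claimed.

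I expect the only place requiring care to be the degeneracy bookkeeping around the projection and lifting steps — checking that the sorted projection and the index-lift really produce honest point sets realizing precisely the intended hypergraph — but this is exactly the standard perturbation/tie-breaking type of argument already invoked elsewhere in the paper, so it should be routine rather than a genuine obstacle.
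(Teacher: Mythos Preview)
Your proposal is correct and follows essentially the same route as the paper: the observation is not proved in detail here but only sketched earlier in Section~\ref{sec:upper} (and attributed to \cite{KP,PPT11}), via the bijection that uses the third coordinate as the time of arrival and the first two coordinates as the planar point, which is exactly the projection/lift you carry out. Your two inclusions and the reading-off of the midriff equalities match this standard argument.
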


Now we will define the hypergraph families that are the main topic of this section.

\begin{definition}
The set of all intervals\footnote{Note that we are dealing with finitely many objects, so it does not matter if the intervals are closed or open.} on the real line is denoted by $\IIa$.
\end{definition}

\begin{description}
\item[\PI:] Vertices: a finite point set;\\
Hyperedges: subsets of the vertex points contained in an interval $I\in \IIa$.


\item[\IP:] Vertices: a finite set of intervals;\\
Hyperedges: subsets of the vertices (intervals) containing a point $p\in \R$.


\item[\IBI:] Vertices: a finite set of intervals;\\
Hyperedges: subsets of the vertices (intervals) contained in an arbitrary interval $I\in \IIa$.


\item[\ISI:] Vertices: a finite set of intervals;\\
Hyperedges: subsets of the vertices (intervals) containing an arbitrary interval $I\in \IIa$.


\item[\ICI:] Vertices: a finite set of intervals;\\
Hyperedges: subsets of the vertices (intervals) intersecting an arbitrary interval $I\in \IIa$.
\end{description}

Now we study the relations among the above five hypergraph families.
By exchanging points with small enough intervals we get that the family \PI is a subfamily of \IBI, and the family \IP is a subfamily of \ISI, while both \PI and \IP are subfamilies of \ICI.
Observation~\ref{obs:todynamic} implies several inequalities among their midriffs, for example, that $\mPI \le \mIBI$. 

We are only aware of earlier papers studying the first two variants.
It follows from a greedy algorithm that $\mPI=2$ and $\mIP=2$.
It was shown in \cite{wcf} that $\mPI=4$, and later this was generalized for $k$-colors in \cite{A+13}.
It was also shown in \cite{wcf} that $\mIP=3$, and later this proof was simplified in \cite{KNP}.
It is interesting to note that for the \dPI $m$-proper coloring problem 
there is a so-called {\em semi-online} algorithm, that can maintain an appropriate partial $m$-proper coloring of the points arrived so far, while it was shown in \cite{colorful2} that no semi-online algorithm can exist for $m$-proper coloring \dIP.
Here we mainly study the other three hypergraph families.

\begin{figure}[t]
\begin{center}
\includegraphics[scale=0.5]{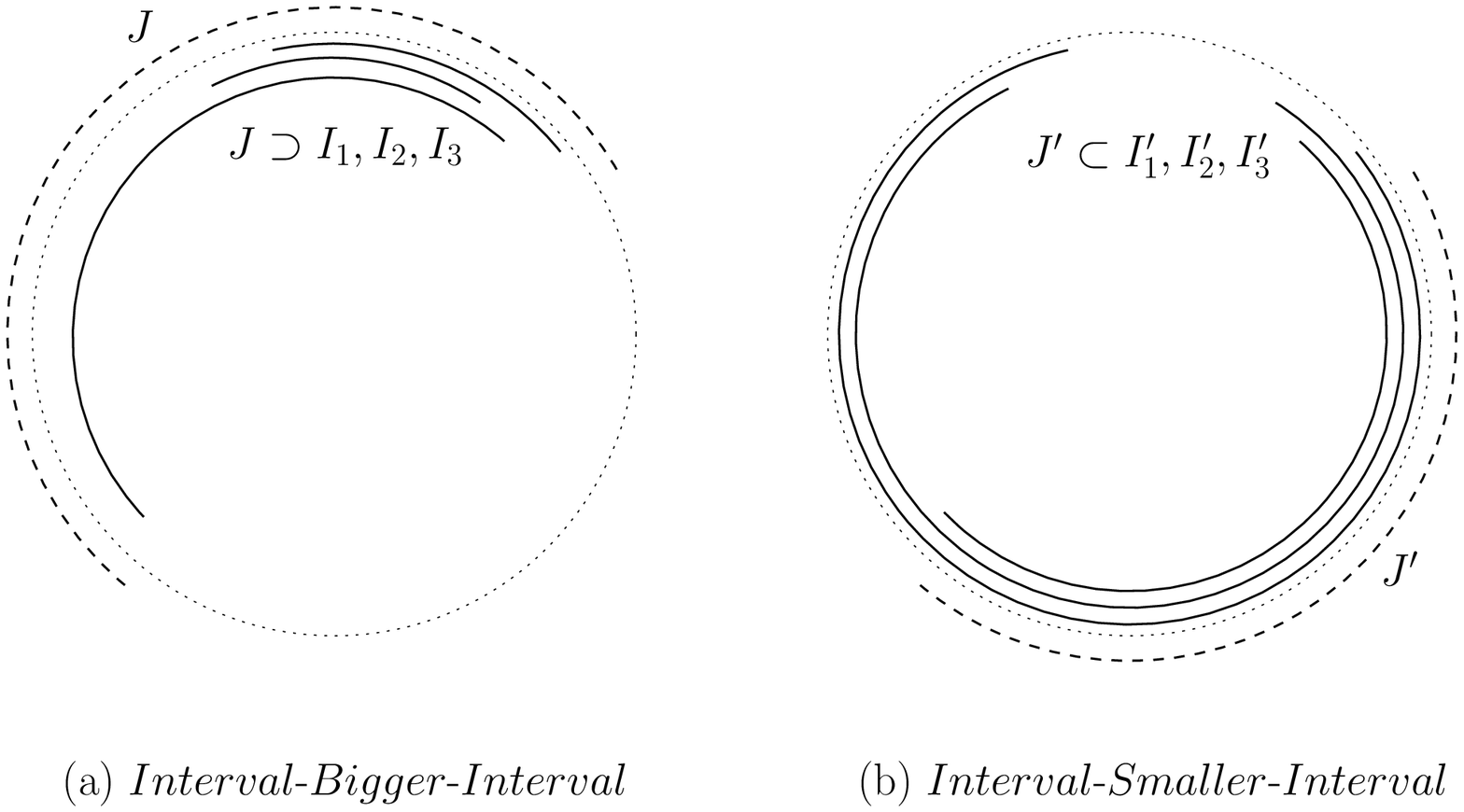}
\end{center}
\caption{\IBI equals \ISI.}
\label{fig:circledual}
\end{figure}

\begin{prop}\label{claim:ibiisi} 
 \IBI equals \ISI, thus \dIBI equals \dISI.
\end{prop}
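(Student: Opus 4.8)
The statement is that the hypergraph families \IBI and \ISI coincide, from which the statement about their dynamic closures is immediate (the dynamic closure of a family depends only on the family as a set of hypergraphs with orderings). So I would focus entirely on showing that a hypergraph realizable as "intervals contained in an interval" is also realizable as "intervals containing an interval", and conversely.

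The plan is to exhibit an explicit containment-reversing involution on the set of intervals, exactly in the spirit of the octant/quadrant duality used earlier in the paper. Concretely, map an interval $I=(a,b)$ to the interval $\varphi(I)=(-b,-a)$ — or more flexibly, think of it as the following: send each interval $[a,b]$ to the point $(a,b)$ in the plane, and note that "$[a,b]\subseteq[c,d]$" becomes "$c\le a$ and $b\le d$", i.e. a quadrant-type condition on $(a,b)$, while "$[a,b]\supseteq[c,d]$" becomes "$a\le c$ and $d\le b$", the opposite quadrant. The key geometric observation (this is what Figure~\ref{fig:circledual} is illustrating) is that reflecting through the line of slope $-1$, i.e. $(a,b)\mapsto(-b,-a)$, interchanges these two quadrant conditions while preserving the validity of $a<b$. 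So $\varphi$ is an involution on $\IIa$, and for intervals $I,J$ we have $I\subseteq J \iff \varphi(J)\subseteq\varphi(I)$, equivalently $\varphi(I)\supseteq\varphi(J)$.

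Given this, the argument is short. Take any hypergraph $\HH$ in \IBI: its vertices are intervals $I_1,\dots,I_n$ and its hyperedges are the sets $\{j : I_j\subseteq K\}$ over all containing intervals $K\in\IIa$. Replace each vertex interval $I_j$ by $\varphi(I_j)$. Then for any interval $L\in\IIa$, the set of vertices $\varphi(I_j)$ that contain $\varphi(L)$ — wait, more cleanly: for a hyperedge defined by $K$, set $L=\varphi(K)$; then $I_j\subseteq K \iff \varphi(K)\subseteq\varphi(I_j) \iff L\subseteq\varphi(I_j)$, so the hyperedge $\{j:I_j\subseteq K\}$ equals $\{j: L\subseteq\varphi(I_j)\}$, which is exactly a hyperedge of the \ISI hypergraph on vertex set $\{\varphi(I_1),\dots,\varphi(I_n)\}$. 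As $K$ ranges over all intervals, so does $L=\varphi(K)$, so we recover exactly the hyperedge set. Hence $\HH\in$\ISI, giving \IBI $\subseteq$ \ISI. Since $\varphi$ is an involution, applying the same map gives the reverse inclusion, so the two families are equal. Finally, since this is an equality of families of (unordered) hypergraphs, Observation~\ref{obs:todynamic} (or directly the definition of dynamic closure) yields \dIBI $=$ \dISI.

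I do not anticipate a serious obstacle here; the only thing to be careful about is the bookkeeping that $\varphi$ genuinely maps nondegenerate intervals to nondegenerate intervals and that the hyperedge sets match up as sets of vertex-indices (so that multiplicities/coincidences among the $I_j$ are handled correctly) — but since the paper already allows identifying points with tiny intervals and treats everything as finite, this is routine. The one modelling point worth stating explicitly in the write-up is that we may assume all interval endpoints are distinct (generic perturbation, as done earlier for points), so that containment is a well-defined relation and the correspondence is clean.
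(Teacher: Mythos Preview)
Your argument is correct. The map $\varphi([a,b])=[-b,-a]$ is an involution on $\IIa$ and reverses containment, so it transports any \IBI realization to an \ISI realization on the relabelled vertex set, and conversely; the passage to the dynamic closures via Observation~\ref{obs:todynamic} is exactly as in the paper.

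The route, however, is not the one the paper takes. The paper first argues that one may assume all vertex intervals share a common point (by repeatedly swapping an adjacent right/left endpoint pair, which does not change the hypergraph), then views the intervals as arcs on a circle all containing the topmost point and avoiding the bottommost point; taking the \emph{complementary arc} is the containment-reversing involution used there (Figure~\ref{fig:circledual}). Your linear map $[a,b]\mapsto[-b,-a]$ achieves the same containment reversal without the normalization step or the circle detour, so it is a shorter and more elementary proof of the same fact. The paper's version, on the other hand, makes the picture in Figure~\ref{fig:circledual} literal and ties in naturally with the circular variant alluded to in the footnote. Both approaches are instances of ``find a containment-reversing bijection on intervals''; yours just picks the simplest such bijection.
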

\begin{proof}
By Observation~\ref{obs:todynamic} it is enough to prove the first statement.
Notice that in both \IBI and \ISI, we can suppose that the left endpoint of any vertex interval is to the left of the right endpoint of any vertex interval, as if we have a right endpoint such that the closest endpoint to its right is a left endpoint, then swapping them does not change the hypergraphs.
Thus, without loss of generality, there is a point that is in all the vertex intervals.
Instead of a line, imagine that the vertex intervals of a hypergraph of \IBI are the arcs of a circle such that none of them contains the bottommost point of the circle and all of them contains the topmost point.\footnote{Without the extra condition regarding the bottommost point, we could define a circular variant of the problem whose parameter $m$ can be at most one larger than \mIBI but we omit discussing this here.}
This is clearly equivalent to the version when the vertex intervals are on the line.
Similarly, we can imagine that the vertex intervals of a hypergraph of \ISI are the arcs of a circle such that none of them contains the topmost point of the circle and all of them contains the bottommost point.
Taking the complement of each arc transforms the families into each other, that is, in a hypergraph $H$ of \IBI an interval $J$ defines the hyperedge $\{I\in H: J\supset I\}$ while in the hypergraph $H'$ of \ISI whose vertices are the complements of the intervals in $H$, the complement $J'$ of $J$ defines the hyperedge $\{I'\in H': J'\subset I\}$. As $J\supset I$ if and only if $J'\subset I'$, $H$ and $H'$ are isomorphic, see Figure~\ref{fig:circledual}.
\end{proof}

\begin{figure}[t]
\begin{center}
\includegraphics[width=1\textwidth]{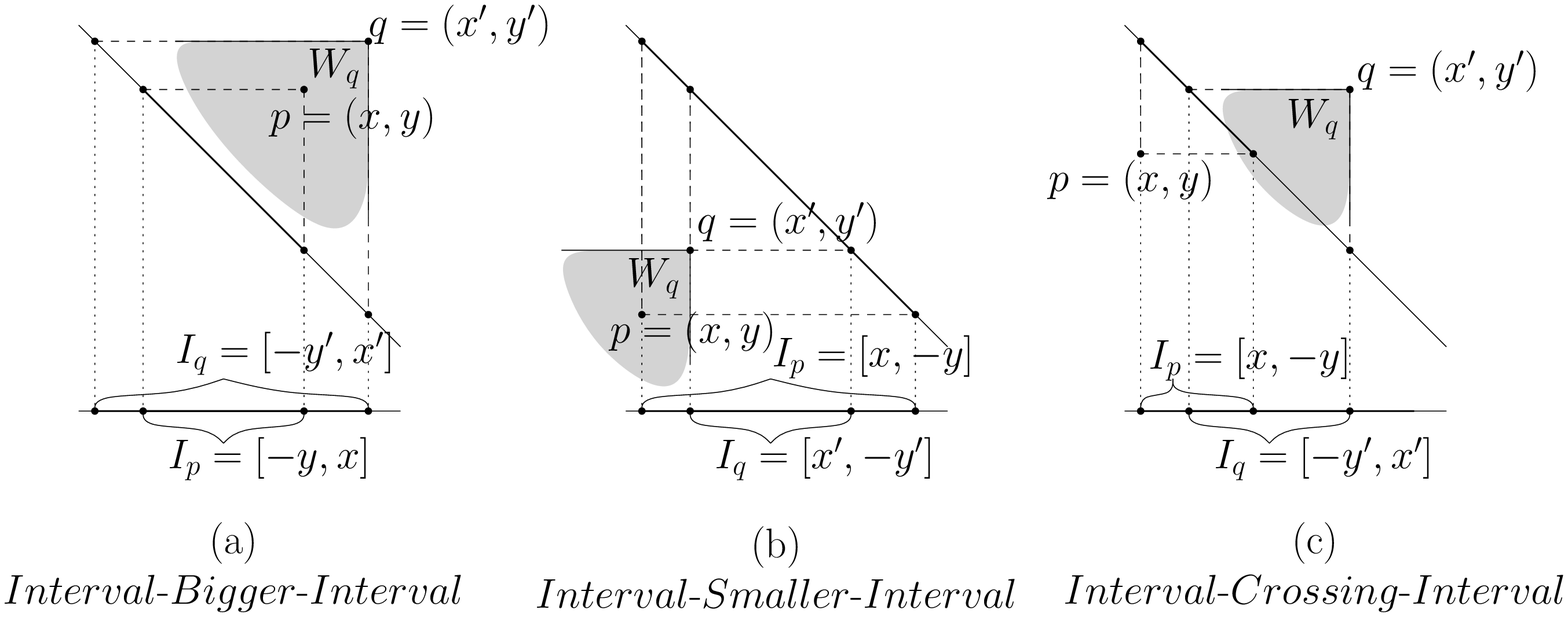}
\end{center}
\caption{Left: \IBI equals \PQ, Center: \ISI equals \PQ, Right: \ICI is a subfamily of \PQ.}
\label{fig:3equi}
\end{figure}

\begin{lem}\label{lem:ibiisi} 
\IBI and \ISI are both equal to \PQ, while
\ICI is a family of subhypergraphs of hypergraphs from the above, and the same holds for the dynamic variants.
\end{lem}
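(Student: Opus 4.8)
The goal is to identify, in terms of the planar hypergraph family \PQ, each of the three interval families \IBI, \ISI, \ICI. By Proposition~\ref{claim:ibiisi} we already know \IBI\ $=$\ \ISI, so it suffices to prove \IBI\ $=$\ \PQ\ and that \ICI\ is a family of subhypergraphs of \PQ-hypergraphs; the dynamic statement then follows for free by Observation~\ref{obs:todynamic}. The whole argument is a coordinatization: encode an interval $I=(a,b)$ by the planar point $(-a,b)$, i.e.\ by its left endpoint (negated, so that ``smaller left endpoint'' becomes ``larger $x$-coordinate'') and its right endpoint.

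\textbf{Step 1: \IBI\ is a subfamily of \PQ.} Take a hypergraph from \IBI\ on vertex intervals $I_j=(a_j,b_j)$. As in the proof of Proposition~\ref{claim:ibiisi}, we may assume all left endpoints lie to the left of all right endpoints. Map $I_j\mapsto q_j:=(-a_j,b_j)\in\R^2$. A query interval $J=(c,d)$ that is long enough to reach from the left-endpoint region to the right-endpoint region satisfies $J\supset I_j$ iff $c<a_j$ and $b_j<d$, i.e.\ iff $-a_j<-c$ and $b_j<d$, i.e.\ iff $q_j$ lies in the quadrant $(-\infty,-c)\times(-\infty,d)$. Query intervals that are too short to span both regions define no new edges (they contain no vertex interval), so every hyperedge of the \IBI-hypergraph is realized as a quadrant hyperedge on $\{q_j\}$, giving containment in \PQ.

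\textbf{Step 2: \PQ\ is a subfamily of \IBI.} Conversely, given a finite point set $\{q_j=(x_j,y_j)\}\subset\R^2$, assign to $q_j$ the interval $I_j:=(-x_j,\,M+y_j)$ where $M$ is a constant large enough that all left endpoints $-x_j$ sit to the left of all right endpoints $M+y_j$; by the perturbation remark we may assume all coordinates are distinct. A quadrant $(-\infty,x)\times(-\infty,y)$ contains exactly those $q_j$ with $-x_j>-x$ and $M+y_j<M+y$, which is precisely the set of $I_j$ contained in the interval $(-x,\,M+y)$ (again, this query interval is long enough to span both endpoint clusters, and conversely every sufficiently long query interval arises this way). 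Hence the quadrant hypergraph on $\{q_j\}$ is isomorphic to an \IBI-hypergraph, and with Step~1 this yields \IBI\ $=$\ \PQ; combining with Proposition~\ref{claim:ibiisi} gives \ISI\ $=$\ \PQ\ as well.

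\textbf{Step 3: \ICI.} For the crossing variant, use the same encoding $I_j\mapsto(-a_j,b_j)$ but now ``$I_j$ intersects $J=(c,d)$'' means $a_j<d$ and $c<b_j$, which is \emph{not} a downward-closed condition of the form ``point in a quadrant'' — it is the complement of a quadrant condition. So instead I would observe that $I_j$ \emph{misses} $J$ iff $b_j\le c$ or $d\le a_j$, i.e.\ iff $I_j$ lies entirely left of, or entirely right of, $J$; thus the hyperedge of intervals crossing $J$ is the complement (within the vertex set) of the union of two ``one-sided'' families. A cleaner route, matching the figure's ``\ICI\ is a subfamily of \PQ'': fix $J$, and note that the intervals crossing $J$ are exactly those containing the single point, say, $c$ of $\partial J$ together with finitely many more — more carefully, as $J$ ranges over all intervals the family $\{I\in H: I\cap J\neq\emptyset\}$ ranges over a subfamily of the edge sets obtainable from \IP-type and \IBI-type queries, hence (since both \IP\ $\subseteq$\ \ICI-ambient and \IBI\ $=$\ \PQ) over subhypergraphs of \PQ-hypergraphs. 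Concretely I would show: the set of intervals crossing $(c,d)$ equals the union over the at most two ``extreme'' vertices of the stars at their endpoints, or simply exhibit for each such edge a quadrant on the point set $\{(-a_j,b_j)\}$ after noting $I_j\cap(c,d)\neq\emptyset \iff -a_j > -d \text{ and } b_j > c$ fails to be a quadrant but its negation is, and pass to the complementary family — which is a subfamily of \PQ\ because \PQ\ is closed under the relevant complementation on a fixed vertex set only up to taking subhypergraphs. The dynamic claim is immediate from Observation~\ref{obs:todynamic} once the non-dynamic containments are established.

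\textbf{Main obstacle.} Steps 1 and 2 are routine bookkeeping about endpoint orderings and the choice of the shift $M$. The genuinely delicate point is Step~3: the crossing relation is not downward-closed in either endpoint coordinate, so \ICI\ is \emph{not} literally a family of \PQ-hypergraphs, only of subhypergraphs, and one must argue carefully that every crossing-query edge is simultaneously cut out by a quadrant on some fixed planar point set (after possibly discarding irrelevant points), handling separately the short queries that meet only ``left parts'' or only ``right parts'' of the vertex intervals. Getting that case split clean — and making precise the sense in which ``subhypergraph'' is needed rather than equality — is where the real work lies.
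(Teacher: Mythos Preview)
Your Steps~1 and~2 are correct and are essentially the paper's argument with different coordinates (the paper encodes a point $p=(x,y)$ above the line $\ell:\{x+y=0\}$ by the interval $[-y,x]$, which is the inverse of your map $I=(a,b)\mapsto(-a,b)$ up to harmless relabeling). The appeal to Observation~\ref{obs:todynamic} for the dynamic statement is also what the paper does.

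Step~3, however, is not a proof; it is a sequence of abandoned attempts, and the ``main obstacle'' you identify is self-imposed. You insist on the \emph{same} encoding $I_j=(a_j,b_j)\mapsto(-a_j,b_j)$ used for \IBI, observe correctly that crossing becomes the condition ``$-a_j>-d$ and $b_j>c$'', i.e.\ membership in a \emph{north-east} quadrant, and then conclude this is not a \PQ\ condition. But a global reflection through the origin turns NE quadrants into SW quadrants, so crossing \emph{is} a \PQ\ condition on the reflected point set. Concretely, encode $I_j=(a_j,b_j)$ by $p_j:=(a_j,-b_j)$; then $I_j\cap(c,d)\ne\emptyset$ iff $a_j<d$ and $b_j>c$ iff $p_j\in(-\infty,d)\times(-\infty,-c)$. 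This is exactly the paper's map (it places the $p_j$ below $\ell$ and the quadrant apices $(d,-c)$ above $\ell$). The reason you get only a \emph{sub}hypergraph of a \PQ\ hypergraph, rather than equality, is now transparent: quadrants whose apex lies below $\ell$ (i.e.\ with $d<c$) do not correspond to any query interval, yet they may still cut out nonempty subsets of $\{p_j\}$.

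Your talk of ``passing to the complementary family'' and ``\PQ\ is closed under the relevant complementation'' is wrong: the complement of a crossing hyperedge (the intervals missing $J$) is a union of two half-line conditions, not a single quadrant, and \PQ\ is certainly not closed under vertex-set complementation of edges. Drop that line entirely and use the reflected encoding above; then Step~3 becomes a two-line computation.
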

\begin{proof}
By Observation~\ref{obs:todynamic}, it is enough to prove the statements about the non-dynamic families.
For an illustration for the proof, see Figure~\ref{fig:3equi}. 
Recall that a quadrant is a set of the form $(-\infty,x)\times (-\infty,y)$ for some apex $(x,y)$.
We can suppose that all points of the point set are in the North-Eastern halfplane above the line $\ell$ defined by the function $x+y=0$, i.e., $x+y>0$ for every $p=(x,y)$. 
For each point $p=(x,y)$ we define an interval, $I_p=[-y,x]$. 
Quadrants that lie entirely below $\ell$ do not contain points from $P$.
For the quadrants with apex above $\ell$, a quadrant whose apex is at $q$ contains the point $p$ if and only if $I_q$ contains $I_p$.
This shows that the hypergraphs in \PQ and in \IBI are the same.

The equivalence of \ISI and \PQ already follows from Proposition~\ref{claim:ibiisi}, but we could give another proof in the above spirit, by supposing that for all points $p=(x,y)$ we have $x+y<0$, moreover, that for every quadrant intersecting some of the points there is a quadrant containing the same set of points whose apex $q=(x',y')$ has $x'+y'<0$.
Now for each point $p=(x,y)$ we can define the interval $I_p=[x,-y]$ and proceed as before.
Note that this gives another proof for Proposition~\ref{claim:ibiisi}.

Finally, taking a $H$ in \ICI, it is isomorphic to the subhypergraph of some $H'$ in \PQ where in $H$ for all points $p=(x,y)$ we have $x+y<0$ and we take only the hyperedges corresponding to quadrants whose apex $q=(x',y')$ has $x'+y'>0$.
Now for each point $p=(x,y)$ below $\ell$ we define $I_p=[x,-y]$, and for each point $q=(x',y')$ above $\ell$ we define $I_q=[-y',x']$, and proceed as before.
This finishes the proof of the theorem.
\end{proof}

As it was shown in \cite{PT10} that $\mPQ=2$, it follows that also $$\mIBI=\mISI=$$ $$=\mICI=2.$$

Quite surprisingly, we could not find a simple direct proof for the fact that \dICI is a family of subhypergraphs of hypergraphs from the families \dIBI and \dISI.

From Theorems~\ref{thm:9} and~\ref{thm:lower}, and Lemma~\ref{lem:ibiisi} we obtain the following.

\begin{cor} $5 \le \mIBI=m_{oct}\le 9$ and\\ $5 \le \mISI=m_{oct}\le 9.$
\end{cor}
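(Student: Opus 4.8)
The plan is to assemble the final corollary purely by chaining together results already established in the paper, so the ``proof'' is essentially bookkeeping rather than new mathematics. First I would recall that Lemma~\ref{lem:ibiisi} tells us that the (non-dynamic) families \IBI and \ISI both coincide with \PQ, and hence—via Observation~\ref{obs:todynamic}, or rather the dynamic part of that lemma—that \dIBI equals \dISI equals \dPQ. This is the crucial structural identification: it lets us transfer any statement about \dPQ directly to \IBI and \ISI in the dynamic setting.

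Next I would invoke the already-recorded fact that \dPQ equals \PO and therefore $\mIBI=\mISI=\mPQ=m_{oct}$ (this equality of midriffs is immediate once the hypergraph families are literally the same, since the midriff depends only on the family). At this point the corollary reduces to pinning down $m_{oct}$ between $5$ and $9$. The upper bound $m_{oct}\le 9$ is exactly Theorem~\ref{thm:9} (in the form stated right after its proof, $m_{oct}\le 9$). The lower bound $m_{oct}\ge 5$ is the consequence of Theorem~\ref{thm:lower} spelled out in the paragraph following it: placing the point set $P$ of that theorem on the plane $x+y+z=0$, whose intersection with octants realizes all homothets of a fixed triangle, yields for every two-coloring an octant with exactly $4$ monochromatic points, so no $4$-proper two-coloring exists and $m_{oct}\ge 5$.

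Combining these, $5\le \mIBI=m_{oct}\le 9$ and likewise $5\le \mISI=m_{oct}\le 9$, which is the statement. I do not expect any genuine obstacle here, since every ingredient is a prior result in the excerpt; the only thing to be careful about is matching the \emph{dynamic} versions correctly—Lemma~\ref{lem:ibiisi} already asserts ``the same holds for the dynamic variants,'' so one must make sure to use that clause rather than only the non-dynamic equalities (which would give $\mIBI=2$, a different—and also true—statement). In other words, the subtlety to flag is simply that \IBI and \ISI have \emph{two} natural midriffs, the static one (equal to $2$) and the dynamic one (equal to $m_{oct}$), and the corollary is about the latter.
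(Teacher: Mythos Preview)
Your proposal is correct and follows exactly the paper's own approach: the paper simply states that the corollary follows from Theorems~\ref{thm:9} and~\ref{thm:lower} together with Lemma~\ref{lem:ibiisi}, and your write-up is precisely a fleshed-out version of this chain of citations, including the correct emphasis on the dynamic variants.
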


\section{Concluding remarks}

Our concluding diagram can be seen on Figure~\ref{fig:diagram}.
Most importantly, for octants we have $5\le m_{oct}\le 9$ and the same bound holds for the homothets and the translates of triangles.
It seems to be in reach to determine these parameters exactly.
For translates of convex $n$-gons for $n>3$, the parameter $m$ might depend on the shape of the $n$-gon for $n$ fixed, and tends to infinity with $n$ \cite{P13}.
The upper and lower bounds \cite{GV09,PT10} are currently very far, already for a square.
Also, it is not known whether there exists an $m$ (again depending on the convex $n$-gon) such that any finite point set admits a two-coloring such that any homothet of the convex $n$-gon containing at least $m$ points is non-monochromatic, not even for the square.
On the other hand, for any $m$ there is an $m$-fold covering by finitely many homothets of any convex $n$-gon (for $n>3$) of some set that does not decompose to two coverings \cite{K14}.


\subsection*{Acknowledgements}
We are grateful to the anonymous referees for their several useful remarks which greatly improved the presentation of the paper.

\begin{figure}
\begin{center}
\includegraphics[width=1\textwidth]{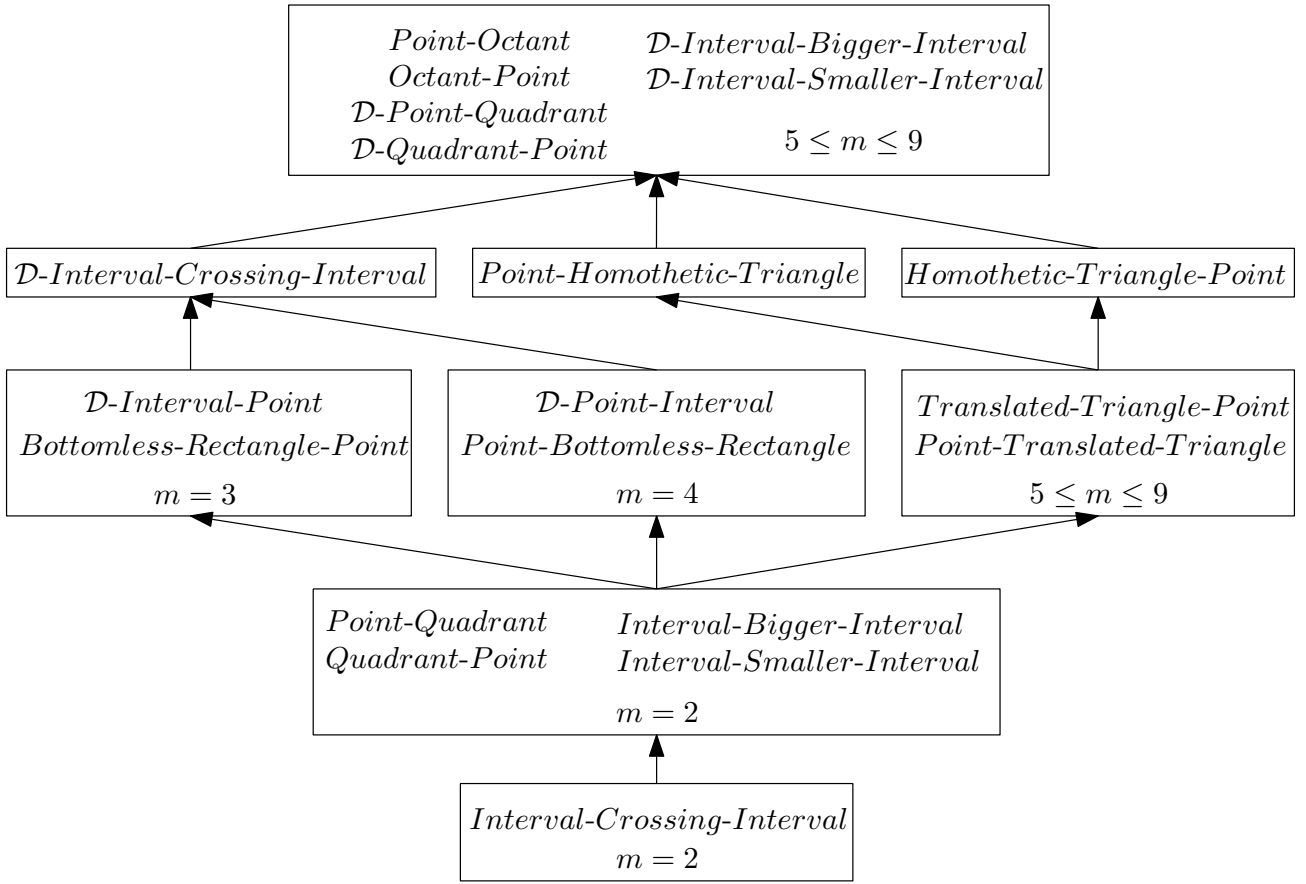}
\end{center}
\caption{Diagram of known results, with edges directed towards more general (dynamic) hypergraphs.
The hypergraph family names follow a similar system as earlier, thus e.g., {\it Homothetic-Triangle-Point} is the family of hypergraphs whose vertex set is a finite set of homothets of a triangle and a subset of these triangles is a hyperedge if and only if there is a point in the plane contained in exactly these homothets. Thus the coloring problem is that we have to color finitely many homothets of a triangle with two colors such that every $m$-fold covered point is contained in both colors.
}
\label{fig:diagram}
\end{figure}

\end{document}